\date{\today}
\title{Transversality of smooth definable maps in O-minimal structures}
\author{Nhan Nguyen and Saurabh Trivedi}
\address{Departamento de Matem\'atica, ICMC - Universidade de S\~ao Paulo, 134560-970, S\~ao Carlos, S.P, Brasil}
\email{nguyenxuanvietnhan@gmail.com}
\email{saurabh.trivedi@gmail.com}
\begin{document}
\newtheorem{thm}{Theorem}[section]
\newtheorem{lem}[thm]{Lemma}
\theoremstyle{theorem}
\newtheorem{prop}[thm]{Proposition}
\newtheorem{defn}[thm]{Definition}
\newtheorem{cor}[thm]{Corollary}
\newtheorem{example}[thm]{Example}
\newtheorem{xca}[thm]{Exercise}

\newcommand{\bb}{\mathbb}
\newcommand{\al}{\mathcal}
\newcommand{\ak}{\mathfrak}
\newcommand{\fs}{\mathscr}

\theoremstyle{remark}
\newtheorem{rem}[thm]{Remark}
\parskip .12cm

\maketitle

\begin{abstract} 
We present a  definable smooth version of the Thom transversality theorem. We show further that  the set of non-transverse definable smooth maps is nowhere dense in the definable smooth topology.  Finally, we prove a definable version of a theorem of Trotman which says that the Whitney $(a)$-regularity of a stratification is necessary and sufficient for the stability of transversality.
\end{abstract}

\section{Introduction}
In o-minimal structures over real closed fields, the topology on the set of definable $\al C^p$-maps between definable $\al C^p$-manifolds induced from the Whitney strong topology is too strong for a transversality theorem to hold (see Example \ref{example1}). In other words, transversality of definable maps is not a generic condition for the induced topology. For this reason Shiota \cite{shiota1} introduced a weaker topology for the semialgebraic structure that can also be adapted to the wider setting of o-minimal structures.  We call this the definable topology. In this topology  Loi \cite{loi1} proved an analogue of the Thom transversality theorem in o-minimal structures. 

The result of Loi was just proved for definable maps of class $\al C^p$ for $p \in \bb N$. He also mentions in his article that it is  not clear whether the theorem holds for $p = \infty$ or $p = \omega$. In this article we deal with the smooth case. We will restrict to exponential o-minimal structures for which the smooth cell decomposition holds. The reason for this is two-fold: first the infinite differentiability is not well defined in o-minimal structures in general as proved by Wilkie \cite{wilkie1} and second we use an approximation result of Fischer \cite{fischer1} and smooth definable bump functions in our proofs. Smooth definable bump functions do not exist in polynomially bounded o-minimal structures as a consequence of a result of Miller \cite{miller1}, and not every o-minimal structure admits smooth cell decomposition (see Le Gal and Rolin \cite{lr}). 

The first result in this article is a definable smooth version of the Thom transversality theorem (Theorem \ref{thm_transversality}). To prove the result we  give a simplified definition of the definable topology on the set of smooth definable maps using definable jet spaces (Subsection 2.3). This definition is equivalent to the one used in the articles of Escribano \cite{escribano1}, Fischer \cite{fischer1}, Shiota \cite{shiota1} and Loi \cite{loi1}. The advantage of our definition is that certain properties, namely Lemmas \ref{lem_jet_map1}, \ref{lem_jet_map2}, \ref{lem_proper} and \ref{lem_restriction}, of the topology follow easily and these are not so apparent from the other definition. Our result then follows by combining the approximation result of Fischer \cite{fischer1} with  the method of Loi \cite{loi1}.

We next discuss the stability properties of transversality to stratifications. In the smooth case the most basic result about stability is that the set of smooth maps transverse to a closed submanifold is open in the strong topology. A more general result can be stated as follows: let $M$, $N$ be smooth manifolds and $\Sigma$ be a stratification of a closed subset in $N$. If the dimension of $M$ is big enough then the following are equivalent:

(1) $\Sigma$ is Whitney $(a)$-regular;

(2) the set $\{f \in \al C^{\infty}(M,N) : f \pitchfork \Sigma\}$ is open in the strong topology.

That (1) implies (2) is a result of Feldman \cite{Feldman}. In the o-minimal setting, an analogue of Feldman's result was included in the transversality theorem of Loi \cite{loi1}. It is easy to show that the same also holds for smooth definable maps. Then, using  the fact that any stratification of a definable set admits a Whitney $(a)$-regular refinement, we show that the set of definable smooth maps non-transverse to a given finite collection of definable $\al C^1$-manifolds is nowhere dense in the definable topology (Theorem \ref{cor_strong_transversality}). To our knowledge such a result has not been observed before in the smooth or complex settings and might not even be true. It is worth mentioning here that Mather \cite{Mather} has also sought such a result for generic projections. He asks whether projections transverse to Boardman manifolds contain an open and dense set; see the comment after Theorem 2 of Section 5 in his article.

The converse of Feldman's result, i.e. (2) implies (1) was proved by Trotman \cite{trotman1} (see also \cite{st}). The main difficulty in proving the result of Trotman is the existence of a smooth map transverse to a stratification with a given derivative at a point. Trotman used two non-trivial results to prove the existence of such a map. First the Baire property of the Whitney strong topology and second the density of transverse maps. Since it is not clear whether the definable topology has the Baire property the arguments of Trotman do not work in the definable case. To overcome this difficulty we provide a new more constructive method (see Lemma \ref{givendf}) that avoids the use of the Baire property to show that a definable version of Trotman's result holds (Theorem \ref{thm_openness_a_regularity}). This new method might have other applications.

Thoughout the paper, we work with o-minimal structures defined on the ordered field of real numbers $(\bb R, <, +, ., 0,1)$. By a definable set (resp. a definable map) we mean a set (resp. a map) which is definable in the given o-minimal structure. We refer the reader to Coste \cite{coste1} and van den Dries \cite{dries} for definitions and basic results on o-minimal structures. 

Let us fix some notations used in the article. A set $M \subset \bb R^m$ is called \emph{a definable $\al C^p$-submanifold} ($0\leq p \leq \infty$) if $M$ is a definable set and is a $\al C^p$-submanifold of $\bb R^m$. 
By a definable $\al C^p$-manifold we mean a definable $\al C^p$-submanifold of some Euclidean space.  If $p = \infty$ it is called a definable smooth manifold.

By a definable stratification of a given definable set $V \subset \bb R^m$ we mean a partition $\Sigma$ of $V$ into finitely many definable $\al C^1$-submanifolds of $\bb R^m$, called strata, such that for any $X, Y$  in $\Sigma$ such that $Y \cap \overline{X} \neq \emptyset$, we have $Y\subset \overline{X}$ where $\overline{X}$ denotes the closure of $X$. We call  $(X, Y)$ a \emph{pair of adjacent strata}. A stratum $X$ is said to be \emph{Whitney $(a)$-regular} over $Y$ if for every point $y \in Y \cap \overline{X}$ and for any sequence of points $\{x_n\}$ in $X$ tending to $y$ such that the sequence of tangent spaces $\{T_{x_n}X\}$ tends to $\tau \in \bb G_m^{\dim X}$, we have $T_y Y \subset \tau$. A stratification is called Whitney $(a)$-regular if for every pair of adjacent strata $(X,Y)$, $X$ is Whitney $(a)$-regular over $Y$. We denote by $\bb R$ the set of the real numbers, $\bb N= \{0, 1, \ldots\}$ the set of the natural numbers and $\bb G_m^k$ the Grassmannian of all $k$-linear subspaces of $\bb R^m$.\\

\noindent 
\textbf{Acknowledgements.} We would like to thank Professor Maria Ruas for her interest and useful discussion on the paper. We also thank Professor David Trotman for his valuable comments on the manuscript. The first author was supported by FAPESP grant 2016/14330-0 and the second author was supported by FAPESP grant 2015/12667-5.

\section{Topology on  definable maps}
In this section we give a definition of a topology on the set of definable smooth maps using definable jet spaces. This definition is equivalent to the one used by Escribano \cite{escribano1}, Fischer \cite{fischer1}, Loi \cite{loi1} and Shiota \cite{shiota1}. 

\subsection{Definable tubular neighborhood}
Let $\al D$ be an o-minimal structure and $M \subset \bb R^m$ be a definable $C^p$-submanifold with ($1\leq p \leq \infty$). We denote by $\al NM$  the normal bundle of $M$, i.e. is the set $\{(x, v) \in M \times \bb R^m \text{ such that } v \perp T_x M\}$. Then $\al NM$ is a definable $\al C^{p-1}$-submanifold of $\bb R^m \times \bb R^m$. Consider the definable $\al C^{p-1}$-map $\varphi : \al NM \to \bb R^m, (x, v) \mapsto (x + v)$. Following the arguments of Coste \cite{coste1} we can show that there exists a definable open neighborhood $U$ of $M\times \{0\}$ in $\al NM$ such that the restriction $\varphi|_U$ is a definable $\al C^{p-1}$-diffeomorphism onto an open neighborhood $T_M$ of $M$ in $\bb R^m$. We can assume that $U$ is of the form 
$$\{(x, v) \in \al NM, \|v\| < \varepsilon(x)\}$$
where $\varepsilon: M \to (0,\infty)$ is a definable $\al C^k$-function with $k\in \bb N$. 

Let $\pi_1: \al NM \to M$ defined by $\pi_i(x, v) = x$  be  the projection onto $M$. Then the map 
$$ \pi_M: T_M \to M, w \mapsto \pi_1(\varphi^{-1} (w))$$ is a definable $\al C^{p-1}$-retraction. We call the pair $(T_M, \pi_M)$ a \emph{definable tubular neighborhood} of $M$ in $\bb R^m$.

\subsection{Definable jet space}

Let $M\subset \bb R^m$ and $N\subset \bb R^n$ be definable $\al C^p$-submanifolds ($1\leq p \leq \infty$). Denote by $\al D^p(M,N)$ the set of all definable $C^p$-maps between $M$ and $N$ and for an integer $k < p$ denote by $J^k(M, N)$ the $k$-jet bundle of smooth maps from $M$ to $N$. Define 
$$J^k_{\al D}(M, N):= \{ j^k f(x) \in J^k(M, N):  x\in M \text{ and } f \in \al D^{p} (M, N)\}\}$$
and call it the \emph{definable $k$-jet space}.

It is possible to embed $J^k_{\al D}(M, N)$ into a Euclidean space as follows: let us denote by $P^k(\bb R^m)$ the set of all polynomials in $m$ variables with degree at most $k$ and the constant term zero. Let $A$ be the cardinality of the set $\{\alpha = (\alpha_1, \ldots, \alpha_m) \in \bb N^m: 1\leq |\alpha|:=\alpha_1 +\ldots +\alpha_m \leq k\}$. We may identify $P^k(\bb R^m)$ with $\bb R^A$ by 
$$ \sum_{1\leq \alpha \leq k} a_\alpha X^\alpha \leftrightarrow (a_\alpha)_{1\leq \alpha \leq k}.$$

Suppose that $M \subset \bb R^m$ and $N \subset \bb R^n$ are open subsets. Recall that the $k$-jet of a map $f \in \al D^p(M,N)$ at a point $x \in M$ is the truncated Taylor polynomial of degree $k$. That is,
$$j^kf(x)(X) = f(x) +  \sum_{1\leq \alpha \leq k} \frac{\partial^\alpha f(x)}{\alpha!} (X -x)^\alpha \leftrightarrow (x, f(x), \partial^\alpha f(x))_{1\leq \alpha \leq k}$$
where  $$\partial^\alpha f(x):= \frac{\partial^{|\alpha|} f(x)}{\partial x_1^{\alpha_1} \ldots \partial x_m^{\alpha_m}}.$$
Since all polynomial maps are definable, we have $J_{\al D}^k(M, N) = J^k(M, N)\equiv M\times N \times \bb R^{n A}$.

In general, for any definable submanifolds $M \subset \bb R^m$ and $N \subset \bb R^n$ , we take $(T_M, \pi_M)$ and $(T_N, \pi_N)$ to be definable tubular neighborhoods of $M$ and $N$ in $\bb R^m$ and $\bb R^n$ respectively and identify 
$$
J_{\al D}^k(M, N) \equiv \{j^k (f \circ \pi_M) (x): f\in \al D^{\infty}(M, N), x \in M\} \subset J_{\al D}^k (T_M, T_N).$$
Now, consider $\pi_M$ as a  map from $T_M$ to $\bb R^m$ and  $\pi_N$ as a map from $T_N$ to $\bb R^n$. Then we have
$$J_{\al D}^k(M, N) \equiv  \{j^k (\pi_N \circ g \circ \pi_M) (x): g\in \al D^{\infty}(T_M, T_N), x \in M\}.$$
The composition $g\circ \pi_M(x)$ for $x\in M$ makes sense because $M\subset T_M$. Note that $J_{\al D}^k(M, N)$ is a  definable $\al C^{p-k}$-submanifold of $T_M \times T_N \times \bb R^{nA}$ (see \cite{loi1},  \cite{shiota2}). 

\subsection{Definable topology}
Let $M\subset \bb R^m$ and $N\subset \bb R^n$ be definable $\al C^p$-submanifolds ($1\leq p \leq \infty$). Let $k < p$ and  $U$ be a definable subset of $J_{\al D}^k(M, N)$. Set 
$$\al M(U) := \{f\in \al D^p(M, N): j^k f(M) \subset U\}.$$
We can define a topology on $\al D^p(M,N)$ by regarding the family $\{\al M(U)\}$ as its basis. We then call this topology the \emph{$\al D^k$-topology}.

Since $J_{\al D}^k(M, N)$ is a subset of $\bb R^{m +n + nA}$ we will endow $J_{\al D}^k(M, N)$ with the metric induced from $\bb R^{m +n + nA}$. For convenience we take the metric $d(x, y) := \| x -y\|_\infty = \max \{|x_1 - y_1|, |x_2 - y_2|,\ldots\}$. In fact the choice of the metric does not affect the topologies because all metrics on a Euclidean space  are equivalent. Now we define
$$\al U^k_\varepsilon(f) := \{g \in\al D^{p}(M, N) : \forall x \in M, d(j^kf(x), j^kg(x))< \varepsilon(x)\}$$
where $f \in \al D^{\infty} (M, N)$ and $\varepsilon: M \to (0,\infty)$ is a continuous definable function. Then the family $\{\al U^k_\varepsilon(f)\}$ also forms a basis of the $\al D^k$-topology.  To see this fact let us consider the map 
$$ \lambda: J^k_\al D(M, N) \to \bb R, j^k g(x) \mapsto \varepsilon(x) -d(j^k f(x), j^kg (x)).$$
Since $\lambda$ is a definable continuous map, $\lambda^{-1}(0,\infty)$ is an open definable set in $J^k_\al D(M, N)$. Put $U:= \lambda^{-1}(0,\infty)$, then $\al U^k_\varepsilon(f) = \al M (U)$.  It remains to show that for any open neighborhood $W$ of $f$ in the $\al D^k$-topology there is a definable continuous function $\theta : M \to (0, \infty)$ such that $\al U_\theta^k(f) \subset W$. Let $V$ be a definable open subset of $W$ such that $f\in V$. Consider the following function
$$ \delta (x) := \inf\{d(j^kg(x)- j^k f(x)): j^kg(x)\in J^k_{\al D}(M, N)\setminus V\}.$$
Note that $\delta$ is a positive definable function. Choosing a definable continuous function $\theta : X \to (0,\infty)$ such that $\theta < \delta$ we have   
$\al U^k_\theta (f) \subset V$.

Now suppose that $M, N$ are definable smooth manifolds. We define the $\al D^\infty$-topology on $\al D^\infty(M, N)$ as follows. Denote by $W^k$ the basis of the $\al D^k$-topology on $\al D^\infty(M, N)$ and set
$$ W^\infty := \bigcup_k W^k.$$ 
The $\al D^\infty$-topology on $\al D^\infty(M, N)$ is the topology whose basis is $W^\infty$. 

\begin{rem} The definition of the definable topology on $\al D^p(M, N)$ using vector fields as used in \cite{escribano1}, \cite{fischer1}, \cite{loi1} and  \cite{shiota1} is as follows: 
	
	For each $k < p$,  let us fix $V_1,\ldots,V_r$ definable $\al C^k$-vector fields on $M$ such that for each $x \in M$, $V_1(x),\ldots,V_r(x)$ generate the tangent space $T_xM$. For $f \in \al D^{p}(M,N)$ and $\varepsilon : M \to (0,\infty)$, a continuous definable function, we define $$\al B^0_{\varepsilon}(f) := \{g \in \al D^{p}(M,\bb R) : |f(x) - g(x)| < \varepsilon(x)\}$$
	and 
	\begin{align*}
	\al B^k_{\varepsilon}(f):=& \{g \in \al D^{p}(M,\bb R) : |V_{i_1}\ldots V_{i_j} (f-g)(x)| < \varepsilon(x) \\ & \quad \forall x \in M, 1 \leq i_1,\ldots,i_j \leq r, 1 \leq j \leq k\}  
	\end{align*}
	for $k>0$. 
	
The $\al D^k$-topology on $\al D^{p}(M,\bb R)$ is the topology whose basis is given by the sets of the form $\al B^0_{\varepsilon}f$ and  $\al B^k_{\varepsilon}f$. The $\al D^k$-topology on $\al D^{p}(M,\bb R^n) = \al D^{p}(M,\bb R) \times \ldots \times \al D^{p}(M,\bb R)$ is defined to be the product topology. For $N$ a definable smooth submanifold of $\bb R^n$, we consider $\al D^{p}(M,\bb R^n)$ as a subset of  $\al D^{p}(M,\bb R^n)$ and define the $\al D^k$-topology on $\al D^{p}(M,N)$  to be the topology induced from $\al D^{p}(M,\bb R^n)$. 

If $M, N$ are definable smooth manifolds then the $\al D^{\infty}$-topology on $\al D^{\infty}(M,N)$ is the topology whose basis is given by the union of all open sets of $\al D^k$-topologies on $\al D^{\infty}(M,N)$ for $k = 0,\ldots$. 
	
Notice that the definition of the topologies above does not depend on the choices of vector fields (see \cite{escribano1}). It is not difficult to prove that the topologies defined by using jet space and the one by using vector fields are the same. A hint for this is to take $V_1, \ldots, V_n$ the orthogonal projections of the canonical vector fields of $\bb R^m$ onto the tangent space of $M$.
\end{rem}

\begin{rem}
Let $M, N$ be $\al C^p$-manifolds. Recall that the $\al C^k$-topology ($k \leq p$) on $\al C^{p}(M,N)$ is the topology generated by the basis given by the sets of the form $\{f \in \al C^{p}(M,N) : j^kf(M) \subset U\}$ where $U$ is an open set in the $k$-jet bundle $J^k(M,N)$. Recall also that if $\varepsilon : M \to (0, \infty)$ is a continuous function, $f \in \al C^{p}(M,N)$ and $d$ is a metric on $J^k(M,N)$, then the set 
$$B_{\varepsilon}f = \{g \in \al C^{p}(M,N) : d(j^kf(x),j^kg(x)) < \varepsilon(x)\,\,\, \forall x \in M\}$$ is an open neighbourhood of $f$ in the $\al C^k$-topology. Then, it is well known that a sequence $f_i$ converges to $f$ in the $\al C^k$-topology if there exists a compact set $K$ on which $f_i$ converges uniformly to $f$ and all but finitely many $f_i$'s are equal to $f$ outside $K$ (see  \cite{gg}).

Now suppose that $M$ and $N$ are two definable $\al C^p$ manifolds in some o-minimal structure. Consider $\al D^p(M,N)$ as a subset of $\al C^p(M,N)$. We remark that for the topology on  $\al D^p(M,N)$ induced by the $\al C^k$-topology on $\al C^{p}(M,N)$, transversality to submanifolds in the target is not a generic condition. This is quite easy to see in the semialgebraic setting:

\begin{example}\label{example1}\rm
Let $M = \bb R$, $N = \bb R^2$, $S \subset N$ be the $x$-axis and $f : M \to N$ be defined by $f(x) = (x,0)$. Clearly $f$ is semialgebraic and is non-transverse to $S$. To prove that transversality is not a generic condition for the induced topology on $\al D^p(M,N)$ of semialgebraic $\al C^p$-maps, we exhibit a neighbourhood of $f$ that does not contain any transverse map. For this, consider the continuous function $\varepsilon : M \to \bb R^+$ given by $\varepsilon(x) = e^{-x}$. Since the structure of semialgebraic sets is polynomially bounded, for any semialgebraic map $g$ such that $\|f(x)-g(x)\| < \varepsilon(x)$, there exists $a \in \bb R$ such that $f(x) = g(x)$ for all $x > a$. Thus no semialgebraic map in the $\varepsilon$-neighborhood of $f$ can be transverse to $S$. 
\end{example}

It is worth noticing that a sequence $\{f_i\}$ converging to $f \in \al D^p(M, N)$ in the $\al D^k$-topology need not be equal outside a compact set as in the case of $\al C^p(M, N)$ (with the $\al C^k$-topology). Again, it is easy to realize this fact in the semialgebraic case with $k=0$.

\begin{example}\rm
Consider $M = \bb R$, $N = \bb R^2$ as semialgebraic $\al C^1$-manifolds. Let $f, f_i: M \to N$ be semialgebraic $\al C^1$-maps defined by $f(x) = (x,0)$,  and
$f_i(x) = (x, x^{-2i})$ outside the compact set $V = [-1, 1]$ and inside $V$ $\{f_i\}$ uniformly converging to $f$. Since the semialgebraic structure is polynomially bounded, it is not hard to show that for any semialgebraic continuous function $\varepsilon: M \to \bb R^+$, we have $\max\{\|(f_i (x)- f(x)\| \}< \varepsilon(x), \forall x \in M$ for $i$ big enough. This means that $\{f_i\}$ converges to $f$  in the $\al D^0$-topology but the $f_i$ are not equal to $f$ outside a compact set.  
\end{example}
\end{rem}

From now on we just restrict to the $\al D^\infty$-topology. It can be easily checked that Propositions 3.4, 3.5, 3.6, 3.9 and 3.10 in Chapter II of Golubitsky and Guillemin \cite{gg} still hold for the $\al D^\infty$-topology in any o-minimal structure. For the sake of clarity we list some of them here and will use in the later sections. 

\begin{lem}\label{lem_jet_map1}
	Let $M, N$ be  definable smooth manifolds. The map 
	$$j^k : \al D^\infty (M, N) \to \al D^\infty (M, J_{\al D}^k(M, N)), f \mapsto j^kf$$ is continuous in the $\al D^\infty$-topology. 
\end{lem}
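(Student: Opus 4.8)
My plan is to reduce the continuity of $j^k$ to a single structural fact: iterating jets is the same as truncating a higher-order jet through one fixed definable smooth map. Precisely, for each $\ell \in \bb N$ I would first construct a natural definable smooth map
$$\Phi : J^{k+\ell}_{\al D}(M, N) \longrightarrow J^\ell_{\al D}(M, J^k_{\al D}(M, N))$$
characterised by $\Phi \circ j^{k+\ell} f = j^\ell(j^k f)$ for every $f \in \al D^\infty(M, N)$. To build $\Phi$, recall that when $M$ and $N$ are open in Euclidean space the coordinates of $j^k f(x)$ are the derivatives $\partial^\alpha f(x)$, $|\alpha| \le k$ (together with $x$ and $f(x)$), while those of $j^\ell(j^k f)(x)$ are the derivatives of order $\le \ell$ of these coordinates. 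Since $\partial^\beta \partial^\alpha f = \partial^{\alpha+\beta} f$ with $|\alpha+\beta| \le k+\ell$, every coordinate of $j^\ell(j^k f)(x)$ is already a coordinate of $j^{k+\ell} f(x)$, so $\Phi$ is simply a fixed linear rearrangement and repetition of coordinates, independent of $f$; in particular it is definable and smooth. For general definable submanifolds I would compose this coordinate map with the tubular neighbourhood retractions $\pi_M$ and $\pi_N$ exactly as in the construction of $J^\bullet_{\al D}$ in Subsection 2.2; these are definable and smooth, so $\Phi$ stays definable and smooth, and the identity $\Phi \circ j^{k+\ell} f = j^\ell(j^k f)$ persists. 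Note that $\Phi$ is well defined on all of $J^{k+\ell}_{\al D}(M,N)$ because $j^\ell(j^k f)(x)$ depends only on $j^{k+\ell} f(x)$.

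Granting this, I would deduce continuity directly from the $\al M(U)$ description of the topology, and this is exactly where the chosen jet-space definition pays off. It suffices to show that the preimage under $j^k$ of every basic open set of the $\al D^\infty$-topology on $\al D^\infty(M, J^k_{\al D}(M,N))$ is $\al D^\infty$-open. Such a set is $\al M(U')$ for some $\ell$ and some open definable $U' \subset J^\ell_{\al D}(M, J^k_{\al D}(M,N))$, and using $j^\ell(j^k f) = \Phi \circ j^{k+\ell} f$ I would compute
$$(j^k)^{-1}(\al M(U')) = \{ f : j^{k+\ell} f(M) \subset \Phi^{-1}(U')\} = \al M(\Phi^{-1}(U')).$$
Because $\Phi$ is definable and continuous, $\Phi^{-1}(U')$ is open and definable in $J^{k+\ell}_{\al D}(M, N)$, so $\al M(\Phi^{-1}(U'))$ is basic open in the $\al D^{k+\ell}$-topology; as $W^{k+\ell} \subset W^\infty$ it is open in the $\al D^\infty$-topology, which finishes the argument.

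The routine parts are the multi-index bookkeeping identifying $\Phi$ with a coordinate rearrangement and the elementary fact that preimages of open (resp.\ definable) sets under continuous (resp.\ definable) maps are open (resp.\ definable). The main obstacle I anticipate is making the construction of $\Phi$ rigorous for general definable submanifolds: one has to verify that inserting $\pi_M$ and $\pi_N$ and differentiating the representative $\pi_N \circ g \circ \pi_M$ yields one well-defined definable smooth map independent of the chosen $f$. I expect this to follow from the same chain-rule computation that identifies $J^k_{\al D}(M,N)$ as a definable $\al C^{p-k}$-submanifold of $T_M \times T_N \times \bb R^{nA}$ in Subsection 2.2.
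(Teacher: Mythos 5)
Your argument is correct and is essentially the paper's own proof: the paper simply cites Proposition 3.4 of Chapter II in Golubitsky--Guillemin, whose proof is exactly your truncation map $\Phi \colon J^{k+\ell}_{\al D}(M,N) \to J^{\ell}_{\al D}(M, J^k_{\al D}(M,N))$ with $\Phi \circ j^{k+\ell}f = j^{\ell}(j^k f)$ and the identity $(j^k)^{-1}(\al M(U')) = \al M(\Phi^{-1}(U'))$. You have additionally carried out the definability and tubular-neighbourhood checks that the paper declares routine, which matches its intent.
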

\begin{proof}
	See Proposition 3.4 in \cite{gg}.
\end{proof}

\begin{lem}\label{lem_jet_map2}
	Let $M $, $N$ and $P $ be definable smooth manifolds. Let $f: N \to P$ be a definable smooth map. The the map 
	$f_* : \al D^\infty(M, N) \to \al D^\infty(M, P)$ given by
	$g\mapsto f \circ g$ is a continuous map in the $\al D^\infty$-topology.  
\end{lem}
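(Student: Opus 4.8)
The plan is to factor the composition operator $f_*$ through the definable jet spaces, reducing everything to the observation that composition with the fixed definable smooth map $f$ induces a \emph{definable smooth} map on $k$-jets. Recall that the $\al D^\infty$-topology on $\al D^\infty(M,P)$ has as a basis the union $W^\infty = \bigcup_k W^k$, and that each $W^k$ consists of the sets $\al M(U)$ with $U$ a definable open subset of $J_{\al D}^k(M,P)$. Hence $f_*$ is continuous if and only if, for every $k$ and every such $U$, the preimage $f_*^{-1}(\al M(U))$ is open in the $\al D^\infty$-topology on $\al D^\infty(M,N)$. So it suffices to handle a single $\al M(U)$ at a fixed level $k$.

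The key step, which I expect to be the only real content, is to construct for each $k$ a definable smooth map
$$\Theta_k : J_{\al D}^k(M,N) \to J_{\al D}^k(M,P) \quad\text{with}\quad j^k(f\circ g)(x) = \Theta_k\big(j^k g(x)\big) \text{ for all } g \in \al D^\infty(M,N),\ x\in M.$$
In the model where $M,N$ are open and the jet of $g$ at $x$ is recorded by the coordinates $(x, g(x), (\partial^\alpha g(x))_{1\le\alpha\le k})$, the chain rule (Fa\`a di Bruno's formula) expresses each $\partial^\alpha(f\circ g)(x)$ as a universal polynomial in the quantities $\partial^\beta f(g(x))$ and $\partial^\gamma g(x)$ for $|\beta|,|\gamma|\le|\alpha|$. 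Since $f$ is definable and smooth, the maps $u\mapsto \partial^\beta f(u)$ are definable and smooth, so the resulting formula for $\Theta_k$ in these coordinates is definable and smooth; it depends only on the jet data, not on the chosen representative $g$, and its values lie in $J_{\al D}^k(M,P)$ because $f\circ g$ is again a definable smooth map. For general $M, N$ one performs the same construction after passing through the tubular-neighborhood retractions $\pi_M,\pi_N$ used to embed the jet spaces, so that $\Theta_k$ is the restriction of a definable smooth map of the ambient spaces $T_M\times T_N\times\bb R^{nA}$.

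With $\Theta_k$ in hand the factorization $j^k(f\circ g) = \Theta_k\circ j^k g$ gives immediately
$$f_*^{-1}(\al M(U)) = \{g : j^k(f\circ g)(M)\subset U\} = \{g : j^k g(M)\subset \Theta_k^{-1}(U)\} = \al M\big(\Theta_k^{-1}(U)\big).$$
Because $\Theta_k$ is definable and continuous, $\Theta_k^{-1}(U)$ is a definable open subset of $J_{\al D}^k(M,N)$, so $\al M(\Theta_k^{-1}(U))$ lies in $W^k\subset W^\infty$ and is therefore open in the $\al D^\infty$-topology. This establishes continuity of $f_*$. This argument is the definable analogue of Proposition~3.5 of \cite{gg}, and in the jet-space formulation it is essentially automatic; the only point requiring care is the definability and smoothness of $\Theta_k$, which rests on the fact that in our exponential o-minimal setting the partial derivatives of the definable smooth map $f$ are themselves definable and smooth.
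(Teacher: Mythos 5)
Your proof is correct and is essentially the argument behind the paper's one-line proof (a citation of Proposition 3.5 of Golubitsky--Guillemin): the jet-level induced map (your $\Theta_k$) together with the factorization $f_*^{-1}(\al M(U)) = \al M\bigl(\Theta_k^{-1}(U)\bigr)$, with the definability of $\Theta_k$ via Fa\`a di Bruno and the tubular-neighborhood embeddings being exactly the ``easily checked'' adaptation the paper leaves to the reader. One minor remark: the paper's basis takes $\al M(U)$ for \emph{arbitrary} definable subsets $U$ of $J_{\al D}^k(M,P)$, not only open ones, but this only simplifies your argument, since $\Theta_k^{-1}(U)$ is definable whenever $U$ is, so the same factorization applies without invoking continuity of $\Theta_k$.
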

\begin{proof}
	See Proposition 3.5 in \cite{gg}. 
\end{proof}

\begin{lem}\label{lem_proper}
	Let $M, N, P$ be smooth definable manifolds. Let $f: P \to M$ be a definable smooth map. If $f$ is proper then the map
	$$f^*: \al D^\infty(M, N) \to \al D^\infty(P, N), g\mapsto g\circ f$$ is continuous in the $\al D^\infty$-topology.
\end{lem}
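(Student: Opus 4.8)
The plan is to prove continuity of $f^*$ by testing it against the subbasic neighborhoods $\al U^k_\varepsilon(h)$ that generate the $\al D^\infty$-topology on the target $\al D^\infty(P,N)$. Concretely, I would fix $h \in \al D^\infty(P,N)$, an integer $k$, a continuous definable $\varepsilon : P \to (0,\infty)$, and an arbitrary $g_0 \in \al D^\infty(M,N)$ with $f^*(g_0) = g_0\circ f \in \al U^k_\varepsilon(h)$, and then construct a basic $\al D^k$-neighborhood of $g_0$ whose $f^*$-image lies in $\al U^k_\varepsilon(h)$; since each such set is $\al D^\infty$-open this gives continuity. Setting $\eta(p) := \varepsilon(p) - d(j^kh(p), j^k(g_0\circ f)(p))$, a positive continuous definable function on $P$, the triangle inequality reduces the problem to finding a continuous definable $\delta : M \to (0,\infty)$ so that every $g \in \al U^k_\delta(g_0)$ satisfies $d(j^k(g\circ f)(p), j^k(g_0\circ f)(p)) < \eta(p)$ for all $p \in P$.

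The second ingredient is the jet of a composition. By the chain rule (Fa\`a di Bruno), the derivatives up to order $k$ of $g\circ f$ at $p$ are definable polynomial expressions in the derivatives up to order $k$ of $g$ at $f(p)$ and of $f$ at $p$; since $f$ is fixed and definable smooth, this yields a definable continuous map $\beta : W \to J^k_{\al D}(P,N)$ on the fibre product $W := \{(p,\sigma) \in P \times J^k_{\al D}(M,N) : \sigma \text{ has source } f(p)\}$, characterized by $\beta(p, j^kg(f(p))) = j^k(g\circ f)(p)$ for every $g \in \al D^\infty(M,N)$ (well-definedness being exactly the statement that $j^k(g\circ f)(p)$ depends only on the $k$-jet $j^kg(f(p))$). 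In particular $\Theta(p,\sigma) := d\big(\beta(p,\sigma), \beta(p, j^kg_0(f(p)))\big)$ is continuous and definable on $W$ and vanishes along the section $\sigma = j^kg_0(f(p))$.

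The crux, and the only place properness enters, is the construction of $\delta$. For $y$ in the definable image $f(P)$ I would set
\[
\delta(y) := \inf\{\, d(\sigma, j^kg_0(y)) : \sigma \in J^k_{\al D}(M,N) \text{ has source } y,\ \exists\, p \in f^{-1}(y) \text{ with } \Theta(p,\sigma) \geq \eta(p) \,\},
\]
so that $d(\sigma, j^kg_0(y)) < \delta(y)$ forces $\Theta(p,\sigma) < \eta(p)$ for all $p \in f^{-1}(y)$. Taking $\sigma = j^kg(y)$ for $g \in \al U^k_\delta(g_0)$ and recalling $\beta(p, j^kg(f(p))) = j^k(g\circ f)(p)$ then gives exactly $d(j^k(g\circ f)(p), j^k(g_0\circ f)(p)) < \eta(p)$, and hence $g\circ f \in \al U^k_\varepsilon(h)$. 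This $\delta$ is definable by o-minimality, and it is \emph{strictly positive}: if $\delta(y)=0$ there would be bad pairs $(\sigma_n,p_n)$ with $\sigma_n \to j^kg_0(y)$, $p_n \in f^{-1}(y)$, and $\Theta(p_n,\sigma_n)\geq \eta(p_n)$; since $f$ is proper the fibre $f^{-1}(y)$ is compact, so after passing to a subsequence $p_n \to p_* \in f^{-1}(y)$, and continuity of $\Theta$ and $\eta$ would yield $0 = \Theta(p_*, j^kg_0(f(p_*))) \geq \eta(p_*) > 0$, a contradiction.

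Finally I would extend $\delta$ to a positive definable function on all of $M$ (its values off $f(P)$ do not affect $g\circ f$, so any positive definable extension works) and then, exactly as in the passage already used in Subsection 2.3, dominate it below by a continuous definable $\delta^* : M \to (0,\infty)$ with $\delta^* \leq \delta$ on $f(P)$. The set $\al U^k_{\delta^*}(g_0)$ is then a $\al D^\infty$-open neighborhood of $g_0$ carried by $f^*$ into $\al U^k_\varepsilon(h)$, which finishes the argument. The main obstacle is genuinely the construction of $\delta$: lacking the Baire-type and local-finiteness tools of the $\al C^\infty$ setting, one must produce this modulus by hand, and it is precisely the compactness of the fibres of the proper map $f$ that rescues positivity of the definable infimum.
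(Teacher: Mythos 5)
Your proof is correct, but you should know that the paper does not actually prove this lemma: its ``proof'' is a one-line citation of note 2, page 49 of Golubitsky--Guillemin, which treats the analogous statement for the Whitney topology. Your argument rests on the same two pillars as the classical one --- the $k$-jet of $g\circ f$ at $p$ is a fixed continuous (here, definable) function $\beta$ of $p$ and $j^kg(f(p))$, and properness of $f$ enters only through compactness of fibres --- but it is organized differently. The classical route takes an arbitrary open $U\subset J^k(P,N)$ and shows that the set of jets $\tau\in J^k(M,N)$ all of whose composites over the fibre of the source land in $U$ is open, the universal quantifier over a compact fibre preserving openness; you instead work with the metric basis $\al U^k_\varepsilon$ and manufacture an explicit definable modulus $\delta$ as an infimum over the bad jets, with properness guaranteeing its pointwise positivity. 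Your metric variant is in fact the right adaptation to this setting: in the $\al D^\infty$-topology one cannot conjure $\delta$ from local finiteness or partitions of unity, one must exhibit a \emph{definable} function, and your inf-of-a-definable-family device combined with the continuous definable positive minorant trick (exactly the device the paper itself uses in Subsection 2.3 to pass from $\delta$ to $\theta$) accomplishes this. So the proposal supplies a self-contained definable proof where the paper offers only a pointer to the smooth literature.

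Two small points to patch, both routine. First, when the bad set over $y$ is empty your infimum is $+\infty$, so truncate, e.g.\ replace $\delta$ by $\min\{\delta,1\}$, before extending off $f(P)$. Second, the well-definedness and definability of $\beta$ should be routed through the tubular representatives of Subsection 2.2: the coordinates of $j^k(g\circ f)(p)$ are the derivatives at $p$ of $(\pi_N\circ g\circ \pi_M)\circ (f\circ \pi_P)$, so Fa\`a di Bruno gives polynomial dependence on the fibre coordinates of $\sigma$ with coefficients that are definable smooth functions of $p$, and $\beta$ lands in $J^k_{\al D}(P,N)$ because every $\sigma\in J^k_{\al D}(M,N)$ is by definition realized by some definable smooth $g$. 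With these remarks inserted, the argument is complete.
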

\begin{proof}
	See note 2, page 49, \cite{gg}.
\end{proof}

\begin{lem}\label{lem_restriction}
	Let $M \subset \bb R^m, N \subset \bb R^n$ be definable smooth manifolds. Let $(T_M,\pi_M)$ be a definable smooth tubular neighborhood of $M$ in $\bb R^m$. Then, the restriction map 
	$$\al D^\infty(T_M, N) \to \al D^\infty(M,  N), f \mapsto f|_M$$
	is continuous in the $\al D^\infty$-topology. 
\end{lem}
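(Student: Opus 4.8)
The plan is to recognize the restriction map as a pullback by the inclusion and then invoke Lemma \ref{lem_proper}. Let $\iota : M \hookrightarrow T_M$ denote the inclusion, which is a definable smooth map since $M$ is a definable smooth submanifold of $T_M \subset \bb R^m$. For any $f \in \al D^\infty(T_M, N)$ we have $f|_M = f \circ \iota = \iota^*(f)$, so the restriction map is exactly $\iota^*$. It therefore suffices to verify the hypotheses of Lemma \ref{lem_proper} with $P = M$, with $T_M$ playing the role of the middle manifold, and with the proper map $\iota$.

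The key step is thus to show that $\iota$ is proper. I would argue this by first showing that $M$ is closed in $T_M$; properness then follows immediately, since $\iota^{-1}(K) = M \cap K$ is a closed subset of the compact set $K$, hence compact, for every compact $K \subset T_M$. To see that $M$ is closed, recall from Subsection 2.1 that the retraction $\pi_M : T_M \to M$ satisfies $\pi_M(w) \in M$ for every $w \in T_M$ and $\pi_M|_M = \mathrm{id}_M$. Consequently $w \in M$ if and only if $\pi_M(w) = w$, so that $M = \{w \in T_M : \|\pi_M(w) - w\| = 0\}$ is the zero set of a continuous function on $T_M$ and hence closed in $T_M$.

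With $\iota$ proper, Lemma \ref{lem_proper} yields directly that $\iota^* : \al D^\infty(T_M, N) \to \al D^\infty(M, N)$ is continuous in the $\al D^\infty$-topology, which is the assertion. I expect the only genuine subtlety to be the verification that the inclusion of $M$ into its tubular neighborhood is proper; once the closedness of $M$ in $T_M$ is established, the statement reduces at once to the preceding lemma and no estimates on jets are required. An alternative, more hands-on route would work directly with the basic neighborhoods $\al U^k_\varepsilon$ together with the natural restriction-of-jets map $J^k_{\al D}(T_M, N)|_M \to J^k_{\al D}(M, N)$, but this forces one to track how the $k$-jet of $f|_M$ at a point of $M$ is recovered from the $k$-jet of $f$; the composition argument above avoids that bookkeeping entirely.
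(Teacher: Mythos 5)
Your proposal is correct and follows essentially the same route as the paper: the paper also writes the restriction map as $\iota^*$ for the inclusion $\iota : M \to T_M$, notes that $M$ is closed in $T_M$ so $\iota$ is proper, and applies Lemma \ref{lem_proper}. Your only addition is an explicit verification that $M$ is closed in $T_M$ (as the fixed-point set of the retraction $\pi_M$), a detail the paper states without proof.
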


\begin{proof} Consider the following map
	$$\iota^*: \al D^\infty(T_M, N) \to \al D^\infty(M, N), f \mapsto f\circ \iota = f|_M$$ 
	which is induced by the inclusion map $\iota: M \to T_M$.
	Since $M$ is closed in $T_M$, $\iota$ is proper. By Lemma \ref{lem_proper}, $\iota^*$ is continuous.
\end{proof}

\begin{rem} In general if $f$ is not proper the map $f^*$ in Lemma \ref{lem_proper} is not continuous.  For instance, consider an o-minimal structure that contains the graph of the exponential function. Suppose $M = N= \bb R$, $P = (0,1) \subset M$ and $f: P \to M$ be the inclusion map. Clearly $f$ is not a proper map. We will show that $f^*$ is not continuous. Let $\varepsilon: P \to (0,\infty)$ be a definable continuous map such that $\lim_{x\to 0} \varepsilon (x) = 0$. First, note that the set  $$ U_P: = \{ g\in \al D^\infty(P, N): |g (x)| < \varepsilon\}$$
is an open set in $\al D^\infty (M, N)$ with the $\al D^\infty$-topology. Set $U_M := (f^*)^{-1} (U_P)$. To show that $f^*$ is not continuous, it suffices to show that $U_M$ is not open. It is easy to see that $U_M$ contains the zero function and if $h\in U_M$ then $h(0) =0$. We will prove that there is no basic open neighborhood of the zero function contained in $U_M$. Indeed, if $U$ is an arbitrary basic neighborhood of the zero function, then by definition, there are $k \in \bb N$ and $\delta: M \to \bb (0, \infty)$, a definable continuous function, such that $U =\al U_{\delta}^k (0)$. By Lemma \ref{lem_appro_def}, there is a definable smooth positive function $\varphi:M \to N$ such that $ \varphi \in \al U_{\delta}^k (0)$. Since $\varphi(0) \neq 0$, $\varphi \not\in U_M$. This implies that $U \not \subset U_M$.
\end{rem}

For the rest of the paper, we restrict to an \emph{exponential o-minimal structure} (i.e. a structure containing the graph of the exponential function) which admits smooth cell decomposition. 

\section{Transversality Theorem}
Recall that a map $f : M \to N$ between two $\al C^1$-manifolds is said to be transverse to a $\al C^1$-submanifold $S \subset N$ at $p \in M$ if either $f(p) \notin S$ or $D_pf(T_pM) + T_{f(p)}S = T_p N$ and in that case we write $f \pitchfork_p S$ and $f \pitchfork S$ in case $f$ is transverse at all points of $M$. If $\Sigma$ is a collection of $\al C^1$-submanifolds of $N$, then $f$ is said to be transverse to $\Sigma$, denoted by $f \pitchfork \Sigma$, if $f$ is transverse to each member of $\Sigma$. 

This section is devoted to the proof of the following theorem.
 
\begin{thm}\label{thm_transversality}  Let $M$ and $ N $ be definable smooth manifolds.  Let $\Sigma$ be a finite collection of definable $\al C^1$-manifolds of $J_{\al D}^k(M, N)$. Then, for any $k \in \bb N$
	
(i) the set $\al A^k_{ \Sigma}:=\{ f \in \al D^\infty (M, N): j^k f \pitchfork \Sigma\}$ is dense in $\al D^\infty(M, N)$ with the $\al D^\infty$-topology.
	
(ii) the set $\al A^k_\Sigma$ is open (and dense) in $\al D^\infty(M, N)$ with the $\al D^\infty$-topology if $\,\,\Sigma$ is a Whitney $(a)$-regular stratification of some definable closed subset of $J_{\al D}^k(M, N)$. 
\end{thm}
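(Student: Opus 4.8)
The plan is to prove this in the standard way of transferring the classical Thom transversality theorem to the definable smooth setting, handling density and openness separately.

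\textbf{Density (part (i)).} First I would reduce to a local, finite problem. By the definition of the $\mathcal D^\infty$-topology a basic neighborhood of any $f \in \mathcal D^\infty(M,N)$ is of the form $\mathcal U^k_\varepsilon(f)$ for some $k$ and some continuous definable $\varepsilon$, so it suffices to show that every such neighborhood contains a map whose $k$-jet is transverse to $\Sigma$. The central tool will be the definable version of the parametric transversality (Thom) lemma: one constructs a definable smooth family $F : M \times P \to N$ of perturbations of $f$ (parametrized by a definable parameter space $P$, e.g. adding polynomial perturbations localized by a definable bump function on a cell of a smooth cell decomposition of $M$) such that the associated jet evaluation map $(x,t) \mapsto j^k F_t(x)$ is a submersion onto $J^k_{\mathcal D}(M,N)$, hence transverse to each stratum $S \in \Sigma$. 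The definable Sard theorem then guarantees that the set of parameters $t$ for which $j^k F_t \pitchfork \Sigma$ is dense (indeed its complement has lower dimension), and choosing $t$ small enough keeps $F_t$ inside $\mathcal U^k_\varepsilon(f)$. Here is where I would invoke the smoothness hypotheses explicitly: the existence of definable smooth bump functions in the exponential structure supplies the localized perturbations, and Fischer's approximation result lets me first replace $f$ by a nearby definable smooth map and then, after perturbing, stay within the prescribed jet neighborhood. This is essentially Loi's method combined with Fischer's approximation, as the introduction promises.

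\textbf{Openness under Whitney (a)-regularity (part (ii)).} For the second part I would argue that when $\Sigma$ is a Whitney $(a)$-regular stratification of a definable closed set $V \subset J^k_{\mathcal D}(M,N)$, the transversality condition $j^k f \pitchfork \Sigma$ is stable under $\mathcal D^\infty$-small perturbations. The key point is the definable analogue of Feldman's observation: $(a)$-regularity ensures that non-transversality cannot ``appear in the limit'' along the strata, because the condition $T_y Y \subset \tau$ for limiting tangent planes $\tau$ forces transversality to the higher stratum $X$ to be inherited from transversality to $Y$ on a neighborhood. Concretely, I would fix $f \in \mathcal A^k_\Sigma$ and show that the set of $1$-jets (it suffices to control the image $j^kf(M)$ together with its derivative, which amounts to a continuity statement at the level of $J^1(M, J^k_{\mathcal D}(M,N))$) meeting the closed set $V$ non-transversally is itself closed and, by $(a)$-regularity, disjoint from $j^kf(M)$; one then produces a continuous definable $\varepsilon$ so that $\mathcal U^k_\varepsilon(f) \subset \mathcal A^k_\Sigma$. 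Lemma \ref{lem_jet_map1} (continuity of $j^k$) is what lets me pull this openness at the jet level back to openness in $\mathcal D^\infty(M,N)$.

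\textbf{Main obstacle.} I expect the genuinely delicate step to be the openness in part (ii), not the density. For density the machinery (definable Sard, bump functions, Fischer approximation) is standard once assembled. For openness the subtlety is that the $\mathcal D^\infty$-topology is weaker than the Whitney strong topology and, as the introduction stresses, need not have the Baire property or the ``equal outside a compact set'' behavior illustrated in the examples; so I must verify that the $(a)$-regularity argument, which classically uses limiting tangent planes along possibly non-compact strata, still closes up with only a definable continuous margin $\varepsilon(x)$ available rather than a uniform one. The precise construction of $\varepsilon$ — controlling simultaneously the distance of $j^kf(x)$ to $V$ and the transversality angle uniformly in a definable way, using o-minimality (definable choice, curve selection) to tame the limiting behavior near the frontier of each stratum — is the technical heart, and I would expect to lean on the finiteness of $\Sigma$ and on $(a)$-regularity exactly to avoid any appeal to Baire category.
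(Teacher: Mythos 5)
Your part (ii) is essentially correct and coincides with the paper's argument: the paper sets $\al L := \{F \in \al D^\infty(M, J^k_{\al D}(M,N)) : F \pitchfork \Sigma\}$, invokes Lemma \ref{lem_a_open} (the definable Feldman--Trotman openness for an $(a)$-regular stratification of a closed set, which needs no Baire property and no new $\varepsilon$-construction beyond Trotman's original argument) and pulls back through the continuity of $j^k$ (Lemma \ref{lem_jet_map1}). Contrary to your assessment, this is the short part of the proof. The genuine gap is in your density argument, at the point where you localize the polynomial perturbations ``by a definable bump function on a cell of a smooth cell decomposition of $M$'': with such localization the jet evaluation $(x,s) \mapsto j^k F_s(x)$ is \emph{not} a submersion at points where the bump vanishes, so Lemma \ref{lem_density_def} (definable parametric transversality via Sard) only gives transversality over the support of the bump, and your claim that the family is submersive ``onto $J^k_{\al D}(M,N)$'' is internally inconsistent with the localization. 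Repairing this cell by cell runs into exactly the obstruction the introduction warns about: preserving transversality already achieved over earlier (non-compact) cells while perturbing over later ones requires either openness of transversality over a non-closed stratified set --- unavailable without $(a)$-regularity --- or a countable-intersection Baire argument, which is not known to be valid for the $\al D^\infty$-topology.

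The paper's construction avoids localization entirely. It first reduces to $M$ open in $\bb R^m$ and $N = \bb R^n$ via definable tubular neighborhoods, using Lemma \ref{lem_composetrans} to transfer transversality through $\pi_{N*}$ and $\pi_M$, and Lemmas \ref{lem_jet_map2} and \ref{lem_restriction} for continuity of the transfers; then it perturbs \emph{globally} by $f_s(x) = f(x) + \sum_{|\alpha|\leq k} s_\alpha x^\alpha \varphi(x)$, where $\varphi$ is a \emph{globally positive} definable smooth function with $|\partial^\alpha \varphi(x)| < \varepsilon(x)/C(1+\|x\|^l)$ for $|\alpha| \leq l$ (Lemma \ref{lem_appro_def}). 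Positivity of $\varphi$ keeps the jet evaluation $\Phi$ a submersion at \emph{every} point of $M$, so one application of Lemma \ref{lem_density_def} yields a single parameter $s$ with $j^k f_s \pitchfork \Sigma$ on all of $M$, while the decay estimate keeps every $f_s$ inside $\al U^l_\varepsilon(f)$. Note also that you mislocate Fischer's theorem: it is not used to replace $f$ by a nearby smooth map ($f$ is already smooth), but inside Lemma \ref{lem_appro_def} to smooth the $\al C^k$ damping function supplied by Loi's Lemma 1 into the smooth multiplier $\varphi$ --- this is precisely the step that makes Loi's $\al C^p$ argument work for $p = \infty$.
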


To prove Theorem \ref{thm_transversality} we need the following lemmas. 

\begin{lem}\label{lem_composetrans}
	Let $f : M \to N$ and $g : N \to P$ be smooth maps. Let $g$ be transverse to a $\al C^1$-submanifold $S \subset P$ and $f$ be transverse to $g^{-1}(S)$. Then, $g \circ f$ is transverse to $S$.
\end{lem}

\begin{proof}
	If $(g\circ f)(M) \cap S = \emptyset$ then the result is trivial. The only interesting case is when there exists $x \in M$ such that $g \circ f(x) \in S$. Set $p = f(x)$. Since $g$ is transverse to $S$, $g^{-1}(S)$ is a submanifold of $N$ and 
	\begin{align}
	D_pg(T_pN) + T_{g(p)}S &= T_{g(p)} P. \label{1}
	\end{align}	
	Since $f$ is transverse to $g^{-1}(S)$ we have
	\begin{align}
	D_xf(T_xM) + T_{f(x)}(g^{-1}S) &= T_{f(x)}N. \label{2}
	\end{align}
	But $f(x) = p$, so (\ref{2}) becomes
	\begin{align}
	D_xf(T_xM) + T_{p}(g^{-1}S) &= T_{p}N. \label{3}
	\end{align}
	Substituting $T_pN$ from (\ref{3}) into  (\ref{1}) gives
	\begin{align*}
	D_pg(D_xf(T_xM) + T_p(g^{-1}(S)) +T_{g(p)} S &= T_{g(p)} P,
	\intertext{and by the chain rule}
	D_x(g \circ f)(T_xM) + D_{f(x)} g (T_{f(x)}(g^{-1}S)) + T_{g(f(x))}S &= T_{f(g(x))}P.
	\end{align*}
	Since $D_{f(x)} g(T_{f(x)}(g^{-1}S)) \subset T_{g(f(x))}S$ we get
	$$D_x(g \circ f)(T_xM) +  T_{g(f(x))}S = T_{f(g(x))}P.$$
This shows that $f \circ g$ is transverse to $S$ and the proof is complete.
\end{proof}

Next, we state Theorem 1.1 of Fischer \cite{fischer1}:

\begin{lem}\label{Fischer} Let $m \geq 0$ be an integer, $U \subset \bb R^n$ be a definable open set and $f : U \to \bb R$ be a definable $C^m$-function. Then, for every definable function $\varepsilon : U \to (0,\infty)$, there is a definable smooth function $g : U \to \bb R$ such that 
$$|\partial^{\alpha}f(u) - \partial^{\alpha}g(u)| < \varepsilon(u)$$
for any $u \in U$ and $|\alpha| \leq m$. 	
\end{lem}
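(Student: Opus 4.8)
The plan is to build the smooth approximant $g$ directly, by a definable \emph{cut-and-paste} that replaces $f$ near its non-smooth locus by its Taylor polynomial and uses a definable smooth bump function to interpolate. Two ingredients of our setting are exactly what make this work: the exponential structure supplies definable smooth bump functions, and the structure admits smooth cell decomposition, so there is a $C^\infty$-cell decomposition of $U$ compatible with $f$ on each cell of which $f$ is smooth. Crucially, no mollification (convolution against a smooth kernel) is used, since integration need not preserve definability; every operation below is algebraic in definable data and therefore definable. At the outset I would also replace $\varepsilon$ by a smaller definable \emph{continuous} positive function (a standard reduction in o-minimal structures), so as to obtain positive local lower bounds for the allowed error.

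First I would isolate the engine in one variable. Suppose $U=(a,b)\subset\bb R$ and that, after cell decomposition, $f$ is smooth off a finite set, say off a single point $c$. Let $T$ be the degree-$m$ Taylor polynomial of $f$ at $c$ (well defined since $f\in C^m$) and put $h:=f-T$, so $h$ is $C^m$, smooth away from $c$, and $h^{(j)}(c)=0$ for $0\le j\le m$. Choose a definable smooth bump $\chi$ with $\chi\equiv 1$ on $[c-\delta/2,c+\delta/2]$ and $\mathrm{supp}\,\chi\subset(c-\delta,c+\delta)$, and set
$$ g:=T+(1-\chi)h=f-\chi h. $$
Then $g$ is definable and smooth everywhere: on $[c-\delta/2,c+\delta/2]$ it equals the polynomial $T$, while away from $c$ both $h$ and $\chi$ are smooth. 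The error $f-g=\chi h$ is supported in $(c-\delta,c+\delta)$, and the Leibniz rule gives $|(\chi h)^{(a)}|\le C\sum_{b\le a}|h^{(b)}|\,\delta^{-(a-b)}$ for $a\le m$. Taylor's theorem applied to $h$ yields $|h^{(b)}(t)|\le \omega(\delta)\,\delta^{\,m-b}/(m-b)!$ on $\{|t-c|\le\delta\}$, where $\omega(\delta):=\sup_{|t-c|\le\delta}|h^{(m)}(t)|\to 0$; hence each $|(\chi h)^{(a)}|\le C'\,\omega(\delta)\,\delta^{\,m-a}\le C'\omega(\delta)$, which tends to $0$ as $\delta\to0$. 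Shrinking $\delta$ (making it depend on position if needed) forces the $C^m$-size of $f-g$ below $\varepsilon$ near $c$, while $g=f$ elsewhere.

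For the general case I would induct on $n=\dim U$, the base case $n=1$ being the above. Using smooth cell decomposition compatible with $f$, the non-smooth locus of $f$ is a definable closed set of dimension $<n$, contained in the walls separating the open top-dimensional cells. Working one coordinate at a time, I would first smooth $f$ across the walls in the $x_n$-direction: for fixed $x'=(x_1,\dots,x_{n-1})$ the slice $t\mapsto f(x',t)$ is $C^m$ and smooth off finitely many points $\zeta_1(x')<\dots<\zeta_r(x')$, the walls being graphs of definable smooth functions of $x'$, so the one-variable construction applies fiberwise. Replacing $f$ by its $x_n$-Taylor polynomial across a band $|x_n-\zeta_i(x')|<\delta(x')$ produces a definable $g_1$, smooth in $x_n$, whose remaining non-smooth locus projects into the $(n-1)$-dimensional bad set of the base. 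The fiberwise Taylor coefficients $\partial_{x_n}^{\,j}f(x',\zeta_i(x'))$ are definable functions of $x'$ (of class $C^{m-j}$), to which the inductive hypothesis in dimension $n-1$ applies, and iterating over the remaining coordinates yields the global smooth $g$.

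The genuine difficulty, and the step I expect to be the main obstacle, is the joint mixed-derivative bookkeeping in the inductive step. The fiberwise construction controls $x_n$-derivatives, but one must verify that \emph{all} mixed partials $\partial^\alpha$ with $|\alpha|\le m$ of the patched function remain within $\varepsilon$; this forces the band-width $\delta(x')$ to be chosen definably and small enough to beat the growth of the derivatives of $\chi$ \emph{and} of the wall functions $\zeta_i$, uniformly as prescribed by $\varepsilon$. Equally delicate is ensuring the induction closes: after smoothing in $x_n$ the Taylor coefficients are only as regular in $x'$ as $f$ permits, so one must smooth them via the inductive hypothesis without reintroducing non-smoothness in $x_n$, which works because the later modifications are supported over the lower-dimensional base and hence do not interfere with the $x_n$-direction already made smooth. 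Throughout, definability is automatic, as the whole construction uses only Taylor coefficients of $f$, polynomial combinations, and definable smooth bump functions. I expect the derivative estimates and the verification of non-interference across coordinate directions to be the most laborious part of the argument.
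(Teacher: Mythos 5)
There is a genuine gap, and it appears at your very first step. Note first that the paper does not prove this statement at all: Lemma \ref{Fischer} is quoted verbatim as Theorem 1.1 of Fischer \cite{fischer1}, so your attempt is measured against the statement itself (and Fischer's proof). Your claimed ``standard reduction'' replacing $\varepsilon$ by a smaller definable \emph{continuous} positive function is false: a definable positive function need not admit any positive continuous minorant. Take $U=\bb R$, $\varepsilon(u)=|u-c|$ for $u\neq c$ and $\varepsilon(c)=1$; any continuous $\varepsilon'\leq\varepsilon$ satisfies $\varepsilon'(c)=\lim_{u\to c}\varepsilon'(u)\leq 0$. The lemma with arbitrary definable $\varepsilon$ is strictly stronger than its continuous-$\varepsilon$ version, and your one-variable engine demonstrably fails for such $\varepsilon$: on the inner band where $\chi\equiv 1$ the error is \emph{exactly} the Taylor remainder $h=f-T$, which is only $o(|u-c|^{m})$. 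For instance, with $f(u)=|u-c|^{m+1/2}$ (which is $\al C^m$ but not smooth at $c$, with zero $m$-jet there) and $\varepsilon(u)=|u-c|^{m+3/4}$ for $u\neq c$, the requirement $|f-g|<\varepsilon$ fails at every point of the inner band, for every choice of $\delta>0$; no pinching of the band rescues jet-matching by the degree-$m$ Taylor polynomial. Handling discontinuous $\varepsilon$, which forces the approximant to be tangent to $f$ to definably prescribed orders along small sets, is part of the actual strength of Fischer's theorem.

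Even restricted to continuous $\varepsilon$, the part you defer as ``laborious bookkeeping'' is the mathematical content, and as sketched it does not close. The derivatives of the wall functions $\zeta_i$ blow up near the bad set of the base, so controlling $\partial^{\beta}_{x'}$ of $\chi\bigl((x_n-\zeta_i(x'))/\delta(x')\bigr)$ requires a bandwidth $\delta(x')$ that is definable, positive, \emph{smooth} (else $g_1$ is not smooth), and dominated by a definably prescribed small function; but producing a definable smooth positive minorant of a definable positive function is itself an instance of the theorem being proved --- in this paper that is precisely Lemma \ref{lem_appro_def}, which is \emph{deduced from} Lemma \ref{Fischer} --- so your induction is circular at this point unless the dimension induction is threaded through it with care you do not supply. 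Fischer's actual mechanism is different: he works with $\Lambda^m$-regular (L-regular) cell decompositions, whose a priori bounds on the derivatives of the cell walls are exactly what tames this blowup; your ``choose $\delta(x')$ small enough to beat the growth'' has no uniform definable content without such a substitute. Finally, your use of exponential bump functions is admissible under this paper's standing hypothesis, but it cannot recover the cited generality: Fischer's theorem holds in polynomially bounded structures admitting smooth cell decomposition, where, as the introduction here notes via Miller's dichotomy, no definable smooth bump functions exist. So the proposal is a plan whose acknowledged obstacles coincide with the substance of Fischer's paper, and whose opening reduction is incorrect as stated.
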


We then have:

\begin{lem}\label{lem_appro_def} Let $U$ be an open definable subset of $\bb R^m$. Let $\varepsilon: U \to (0, \infty)$ be a continuous definable function. Then, for any $k \in \bb N$, there exists a smooth positive definable function $\varphi: U \to \bb R$ such that for all $\alpha \in \bb N^m$ with $|\alpha|\leq k$ we have
	$$ |\partial^\alpha \varphi (x)| < \varepsilon (x), \forall x \in U .$$
\end{lem}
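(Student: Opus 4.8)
The plan is to produce the required function in two moves: first reduce, by Fischer's approximation (Lemma \ref{Fischer}), to the construction of a merely $\al C^k$ definable function satisfying the same pointwise bounds, and then build that $\al C^k$ function explicitly, suppressing its derivatives by raising a tame function to a high power.

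For the reduction, suppose we are given a definable $\al C^k$ function $\psi : U \to \bb R$ with $\psi(x) > 0$ and $|\partial^\alpha \psi(x)| < \varepsilon(x)/2$ for every $x \in U$ and every $\alpha$ with $|\alpha| \le k$ (the bound for $\alpha = 0$ forces $0 < \psi < \varepsilon/2$). Apply Lemma \ref{Fischer} with $m = k$, $f = \psi$, and error function $\delta(x) := \min\{\psi(x), \varepsilon(x)/2\}$, a positive definable function on $U$. We obtain a definable smooth $g : U \to \bb R$ with $|\partial^\alpha g - \partial^\alpha \psi| < \delta$ for all $|\alpha| \le k$. Taking $\alpha = 0$ gives $|g - \psi| < \delta \le \psi$, so $g > 0$ everywhere, and $g < \psi + \varepsilon/2 < \varepsilon$. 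For $1 \le |\alpha| \le k$ we get $|\partial^\alpha g| \le |\partial^\alpha \psi| + \delta < \varepsilon/2 + \varepsilon/2 = \varepsilon$. Hence $\varphi := g$ is the desired function, and the whole problem is reduced to constructing $\psi$.

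To construct $\psi$ I would first replace $\varepsilon/2$ by a convenient positive definable minorant $\theta \le \min\{\varepsilon/2, 1\}$, and then kill the derivatives by taking a large power. The idea is to find a definable $\al C^k$ (or smooth) function $v : U \to (0,1]$ which tracks the decay of $\theta$ near the frontier of $U$, say $0 < v \le \theta$, and whose derivatives up to order $k$ blow up at most like a fixed negative power of $\theta$, i.e. $|\partial^\beta v| \le C\,\theta^{-L}$ for $|\beta| \le k$. Setting $\psi := v^{M}$ and expanding $\partial^\alpha(v^M)$ by the Fa\`a di Bruno / Leibniz formula, every term with $1 \le |\alpha| \le k$ carries a factor $v^{M-r}$ with $1 \le r \le k$ multiplied by a product of at most $k$ derivatives of $v$; using $v \le \theta \le 1$ and the derivative bound, each such term is dominated by $C^k M^k\, \theta^{\,M - r - kL}$. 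Choosing $M > k(1+L)+1$ makes the exponent $\ge 1$, so the term is $\le C^k M^k\theta$, and after shrinking the minorant $\theta$ by the (now fixed) constant factor we obtain $|\partial^\alpha \psi| < \varepsilon/2$ for $1 \le |\alpha| \le k$, while $\psi = v^M \le v \le \theta < \varepsilon/2$ and $\psi > 0$, as required.

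The main obstacle is the construction of the auxiliary function $v$: one needs a definable function that is $\al C^k$, positive, comparable in size to $\theta$, and whose higher derivatives are controlled by a power of $1/\theta$. A naive smooth approximant of $\theta$ produced by Lemma \ref{Fischer} has the right size but no control whatsoever on its derivatives, so this step cannot be carried out by approximation alone, and it is exactly here that o-minimal tameness is essential. I would obtain $v$ from a smooth cell decomposition of $U$ adapted to $\theta$ -- on each cell the distance to the frontier and the function $\theta$ are tame enough to admit such derivative bounds (a consequence of the monotonicity theorem and \L{}ojasiewicz-type inequalities) -- and then glue the pieces using the smooth definable bump functions available in an exponential structure admitting smooth cell decomposition. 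Verifying that the gluing does not spoil the power-law derivative estimates is the delicate point, and it is precisely what forces the hypotheses of an exponential structure with smooth cell decomposition (hence smooth bump functions, which are absent in polynomially bounded structures).
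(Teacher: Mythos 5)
Your reduction step is exactly the paper's argument: the paper takes a definable positive $\al C^k$-function $\psi$ with $|\partial^\alpha \psi| < \varepsilon/2$ for $|\alpha| \le k$ and applies Lemma \ref{Fischer} with error function $\psi$ itself (your $\delta = \min\{\psi, \varepsilon/2\}$ is an immaterial variant), with the same triangle-inequality bookkeeping and the same observation that $|\varphi - \psi| < \psi$ forces positivity. The divergence --- and the genuine gap --- is in how the $\al C^k$-function $\psi$ is obtained. The paper does not construct it: it cites Lemma 1 of Loi \cite{loi1}, a published result valid in an arbitrary o-minimal structure, which delivers precisely a definable positive $\al C^k$-function whose partials up to order $k$ are bounded by $\varepsilon/2$. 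You instead sketch a from-scratch construction ($\psi = v^M$ with $0 < v \le \theta$ and $|\partial^\beta v| \le C\,\theta^{-L}$); the Fa\`a di Bruno arithmetic for suppressing derivatives by a high power is fine as far as it goes, but the entire weight of the lemma now rests on the existence of the auxiliary function $v$, which you do not prove --- you yourself flag the gluing of cell-wise pieces as ``the delicate point'' and leave it unverified. This is not a routine verification: the bound $|\partial^\beta v| \le C\,\theta^{-L}$ is a \L{}ojasiewicz-type estimate with a \emph{fixed power exponent}, and the ambient structure here is exponential, hence not polynomially bounded, so power-type comparisons are exactly what general o-minimal tameness fails to supply (think of $\theta$ decaying like $e^{-1/x}$ near the frontier of $U$). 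Some controlled estimate of this kind can be arranged, but arranging it is the actual content of Loi's Lemma 1, not a footnote to your argument.

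A secondary error: you locate the need for exponentiality and smooth cell decomposition (hence smooth bump functions) in the construction of $v$ and $\psi$. That is backwards. The $\al C^k$-step requires only finite differentiability, and $\al C^k$ cell decomposition --- hence Loi's Lemma 1, and definable $\al C^k$ bump functions for any gluing --- is available in \emph{every} o-minimal structure; the special hypotheses on the structure are consumed by Lemma \ref{Fischer}, the smooth approximation result, which you correctly use as a black box in your reduction. In short: your proof is correct modulo the unproved existence of $v$, and replacing that sketch by a citation of Lemma 1 of \cite{loi1}, as the paper does, is what closes the gap.
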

\begin{proof}
By Lemma 1 in \cite{loi1} there is a definable positive $\al C^k$-function $\psi: U \to \bb R$ such that
\begin{equation*}
|\partial^\alpha \psi (x)| < \frac{\varepsilon (x)}{2},
\end{equation*}
$\forall x \in U$ and $|\alpha|\leq k .$
In particular, for $\alpha = 0$ we have:
\begin{equation}\label{eq12}
|\psi(x)| < \frac{\varepsilon (x)}{2}.
\end{equation}
Now, by Lemma \ref{Fischer} there is a smooth definable function $\varphi : U \to \bb R$ such that 
\begin{equation}\label{eq13}
 |\partial^\alpha \varphi (x) - \partial^\alpha \psi (x)|< \psi(x),
\end{equation}
$\forall x \in U$ and $|\alpha|\leq k.$
In particular, for $\alpha = 0$ we have:
$$|\varphi(x) - \psi(x)| < \psi(x),$$
for all $x \in U$. Since $\psi$ is positive, this implies that $\varphi$ is a positive definable function.
Then,
\begin{align*}
|\partial^{\alpha}\varphi(x)| & = |\partial^{\alpha}\varphi(x) - \partial^{\alpha} \psi(x) + \partial^{\alpha} \psi(x)| \\
& \leq  |\partial^{\alpha}\varphi(x) - \partial^{\alpha} \psi(x)| + |\partial^{\alpha} \psi(x)| \\
& <\psi(x) + \frac{\varepsilon(x)}{2}\\
&< \varepsilon(x) \tag{by (\ref{eq12}) and (\ref{eq13})}
\end{align*}
$\forall x \in U$ and $|\alpha|\leq k.$ This implies that $\varphi$ is the desired map and the proof is complete.
\end{proof}

Our next Lemma is a definable version of the basic transversality theorem whose proof is a standard application of Sard's theorem.  
\begin{lem}\label{lem_density_def} Let $M, S$ and $J$ be definable smooth manifolds, and $\Phi: M \times S \to J$ be a definable smooth submersion. Let $\Sigma$ be a finite collection of definable $C^1$ submanifold of $J$. Then 
	$$\tau (\Phi, \Sigma) = \{s \in S: \Phi_s = \Phi(., s) \pitchfork \Sigma\}$$ is a definable dense subset of $S$ with $\dim (S \setminus \tau (\Phi, \Sigma)) < \dim S$.
\end{lem}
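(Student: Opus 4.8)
The plan is to deduce this definable version of the parametric transversality theorem from the classical (non-definable) parametric transversality theorem together with Sard's theorem, while carefully tracking definability at every step. First I would recall the standard setup: since $\Phi : M \times S \to J$ is a submersion, it is in particular transverse to every submanifold of $J$, so for each stratum $X \in \Sigma$ the preimage $W_X := \Phi^{-1}(X)$ is a $\al C^1$-submanifold of $M \times S$, and because $\Phi$ is definable and $X$ is definable, $W_X$ is a definable submanifold. The key classical fact (the transversality theorem of Thom, as in \cite{gg}) is that $\Phi_s \pitchfork X$ if and only if $s$ is a regular value of the restriction $\pi_X := \pi|_{W_X} : W_X \to S$ of the projection $\pi : M \times S \to S$. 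Thus $S \setminus \tau(\Phi, X)$ is exactly the set of critical values of $\pi_X$, and $\tau(\Phi, \Sigma) = \bigcap_{X \in \Sigma} \tau(\Phi, X)$ since $\Sigma$ is finite.

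The next step is to establish definability and the dimension bound for the complement. The set of critical points of $\pi_X$ is defined by the vanishing of the appropriate Jacobian minors, hence is a definable subset of $W_X$; therefore its image, the critical value set $S \setminus \tau(\Phi, X)$, is definable by the fact that images of definable sets under definable maps are definable. For the dimension estimate I would invoke the definable Sard theorem: in an o-minimal structure the set of critical values of a definable $\al C^1$-map has dimension strictly smaller than that of the target (this follows either from the o-minimal Sard theorem or directly from the cell decomposition and the fact that a definable set of critical values cannot contain an open subset of $S$). This gives $\dim\bigl(S \setminus \tau(\Phi, X)\bigr) < \dim S$ for each $X$. Taking the finite union over $X \in \Sigma$ preserves both definability and the strict dimension inequality, since a finite union of definable sets of dimension $< \dim S$ again has dimension $< \dim S$.

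Finally I would conclude density. A definable subset of $S$ whose dimension is strictly less than $\dim S$ cannot contain any nonempty open subset of $S$, so its complement $\tau(\Phi, \Sigma)$ is dense; moreover this complement is itself definable, being the complement of a definable set. This yields all three assertions: $\tau(\Phi, \Sigma)$ is definable, dense, and $\dim\bigl(S \setminus \tau(\Phi,\Sigma)\bigr) < \dim S$.

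The main obstacle I anticipate is not the topology or the definability bookkeeping, both of which are routine in the o-minimal setting, but rather securing a correctly-stated definable Sard theorem at the level of regularity available here: the strata of $\Sigma$ are only $\al C^1$, so $W_X$ and the map $\pi_X$ are only $\al C^1$, and one must ensure the o-minimal Sard theorem applies to definable $\al C^1$-maps (not merely smooth ones) and delivers the strict dimension drop rather than merely measure zero. I would address this by appealing to the o-minimal version of Sard's theorem for definable $\al C^1$-maps, where the conclusion is genuinely a dimension statement obtained from cell decomposition, so the low regularity of the strata causes no difficulty.
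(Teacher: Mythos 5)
Your proposal is correct and takes essentially the same route as the paper, whose proof is simply a reference to Lemma 3 of Loi \cite{loi1}: that argument likewise passes to the definable $\al C^1$-submanifold $\Phi^{-1}(X)$, identifies the non-transverse parameters with the critical values of the restricted projection onto $S$, and concludes via the o-minimal Sard theorem (valid for definable $\al C^1$-maps, with a genuine dimension drop rather than just measure zero), taking a finite union over the strata at the end. Your closing caveat about the $\al C^1$ regularity of the strata is exactly the point the o-minimal Sard theorem is designed to handle, so there is no gap.
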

\begin{proof}
	Similar to the proof of Lemma 3 in \cite{loi1}. 
\end{proof}

\begin{lem}\label{lem_a_open}Let $M $ and $N$ be definable smooth manifolds.  Let $\Sigma$ be a definable $\al C^1$ stratification of some  definable closed subset of $N$. If $\Sigma$ is Whitney $(a)$-regular then  
	$$\al A_{ \Sigma} := \{ f \in \al D^\infty (M, N):  f \pitchfork \Sigma\}$$ is open in $\al D^\infty(M, N)$ with  $\al D^\infty$-topology.
\end{lem}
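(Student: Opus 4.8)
The plan is to prove openness by contradiction, using a sequential characterization of non-openness together with the Whitney $(a)$-regularity hypothesis. Suppose $\al A_\Sigma$ is not open. Then there exists $f \in \al A_\Sigma$ (so $f \pitchfork \Sigma$) and a sequence $f_i \to f$ in the $\al D^\infty$-topology with each $f_i \notin \al A_\Sigma$. For each $i$ this means $f_i$ fails to be transverse to some stratum of $\Sigma$; since $\Sigma$ is finite, after passing to a subsequence I may assume all the $f_i$ fail transversality to a single stratum $X$, at some point $x_i \in M$ with $f_i(x_i) \in \overline{X}$. The heart of the argument is to extract a limit point and derive a contradiction with $f \pitchfork \Sigma$.

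First I would set up the failure of transversality concretely. At each $x_i$, non-transversality to $X$ at $f_i(x_i) \in X$ means $D_{x_i}f_i(T_{x_i}M) + T_{f_i(x_i)}X \neq T_{f_i(x_i)}N$. Because $M$ is not assumed compact, I must control the points $x_i$; here I would use that $\al D^\infty$-convergence forces $j^0 f_i \to j^0 f$ and $j^1 f_i \to j^1 f$ uniformly (in the sense of the $\varepsilon(x)$ neighborhoods), so the relevant data stay bounded and one can pass to convergent subsequences of $x_i$, $f_i(x_i)$, and the tangent planes $T_{f_i(x_i)}X$ in the appropriate Grassmannian. The two essential cases are whether $x_i$ stays in a compact region (limit point $x_0 \in M$) or escapes to infinity; the definable/o-minimal structure and the curve-selection-type arguments should let me reduce to a convergent situation, but this boundedness issue is exactly where care is needed.

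Next comes the use of Whitney $(a)$-regularity. After passing to subsequences, suppose $f_i(x_i) \to y$ and $T_{f_i(x_i)}X \to \tau$ in the Grassmannian. If $y \in X$ itself, continuity of $f$, $Df$, and of the tangent bundle of $X$ gives $\tau = T_y X$ and the non-transversality passes to the limit, contradicting $f \pitchfork X$ directly. The interesting case is $y \in \overline{X}\setminus X$, so $y$ lies in some lower stratum $Y$ with $(X,Y)$ adjacent. Then Whitney $(a)$-regularity gives $T_y Y \subset \tau$. Passing the non-transversality inequality to the limit yields $D_y f(T_{x_0}M) + \tau \neq T_y N$ (a dimension count: the sum of the limiting spaces is a proper subspace), and since $T_y Y \subset \tau$ this forces $D_y f(T_{x_0}M) + T_y Y \neq T_y N$, i.e. $f$ fails to be transverse to $Y$ at $x_0$. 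This contradicts $f \pitchfork \Sigma$, completing the proof.

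The main obstacle I anticipate is twofold. The more technical difficulty is the compactness/boundedness control needed to extract convergent subsequences of $x_i$ when $M$ is non-compact, since $\al D^\infty$-convergence does not give the "equal outside a compact set" behavior that the classical $\al C^k$-topology provides (as the examples in the excerpt emphasize); I would lean on o-minimality and definable choice to keep this manageable. The conceptually crucial point is that the dimensions must be arranged so that the limiting failure of transversality is genuinely a \emph{closed} condition — one needs the inequality $\dim X + \dim M < \dim N + \dim X$ type bookkeeping to survive the limit, and it is precisely Whitney $(a)$-regularity (via $T_y Y \subset \tau$) that converts the limiting tangent data at $y$ on the stratum $X$ into a transversality failure at the adjacent stratum $Y$. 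Getting this limiting dimension count exactly right is the step most likely to require careful justification.
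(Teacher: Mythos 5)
There is a genuine gap, and it occurs at the very first step: the reduction of non-openness to a convergent \emph{sequence} of maps. The $\al D^\infty$-topology has neighborhood bases $\{\al U^k_\varepsilon(f)\}$ indexed by $k\in\bb N$ and by \emph{all} definable continuous functions $\varepsilon: M\to(0,\infty)$; like the Whitney strong topology on a non-compact manifold, it is not known (and should not be expected) to be first countable, so ``$\al A_\Sigma$ not open'' does not produce a sequence $f_i\to f$ with $f_i\notin\al A_\Sigma$ — at best a net — and conversely, showing that $\al A_\Sigma$ is sequentially open would not show it is open. Your fallback for the second difficulty you flag (points $x_i$ escaping to infinity in $M$) also cannot work as stated: you propose to ``lean on o-minimality and definable choice,'' but an arbitrary sequence $(f_i)$ of definable maps is not a definable family, so no o-minimal finiteness, curve selection, or definable choice applies to it. These two problems are not repairable cosmetically; they are symptoms of taking limits in the (infinite-dimensional, badly non-metrizable) space of maps rather than in the jet space.

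The paper proves the lemma by reducing to the $\al D^1$-topology (openness in $\al D^1$ implies openness in $\al D^\infty$) and invoking the argument of Theorem 1 of Trotman \cite{trotman1}: one shows that the set $U\subset J^1_{\al D}(M,N)$ of $1$-jets $(x,y,\lambda)$ transverse to $\Sigma$ (i.e.\ $y\notin\operatorname{supp}\Sigma$, or $y$ lies in a stratum $S$ with $\lambda(T_xM)+T_yS=T_yN$) is open, and $U$ is definable since $\Sigma$ is; then $\al A_\Sigma=\al M(U)$ is a \emph{basic} open set of the $\al D^1$-topology by definition, with no limit argument about maps at all. All the analytic content — your Whitney $(a)$ step $T_yY\subset\tau$ converting a degenerating sum $\lambda_i(T_{x_i}M)+T_{y_i}X\neq T_{y_i}N$ along $y_i\to y\in Y$ into failure of transversality of the limit jet to $Y$, together with closedness of $\operatorname{supp}\Sigma$ guaranteeing $y$ lies in some stratum, and lower semicontinuity of $\dim(\lambda(T_xM)+T_yS)$ — is exactly right, but it must be carried out for sequences of \emph{jets} in the finite-dimensional, locally compact space $J^1_{\al D}(M,N)$, where subsequence extraction via compactness of Grassmannians is legitimate and the non-compactness of $M$ is irrelevant because openness of $U$ is checked at each jet separately. (Incidentally, the dimension inequality you write, $\dim X+\dim M<\dim N+\dim X$, plays no role; no such bookkeeping is needed once the argument is jetwise.) So the correct fix is to abandon the contradiction-by-sequences-of-maps framework and prove openness of the transverse-jet set, which is precisely the route the paper takes.
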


\begin{proof}
	Notice that open in the $\al D^k$-topology implies open in the $\al D^p$-topology for $p\geq k$, so it is enough to prove the case $k=1$. But, this  follows from the same arguments used in Theorem 1 of Trotman \cite{trotman1} to prove a similar result in the smooth case (see also  \cite{trivedi}, \cite{trotman2}).
\end{proof}

Finally we prove the main theorem of this section.

\begin{proof}[Proof of Theorem \ref{thm_transversality}] 

(i) We assume that $M \subset \bb R^m$ and $N \subset \bb R^n$ are definable smooth submanifolds.  First we reduce the problem to the case when $M$ is an open set of $\bb R^m$ and $N=\bb R^n$.

Let $(T_M, \pi_M)$ and $(T_N, \pi_N)$ be definable tubular neighborhoods of $M$ and $N$ respectively. Let $\pi_{N*}: J_{\al D}^k(M, T_N) \to J_{\al D}^k(M, N)$ be the map given by $j^kf(x) \mapsto j^k (\pi_N \circ f)(x)$, where $f : M \to N \subset T_N$ is a smooth definable map. For each $S \in \Sigma$, set $S' := \pi_{N*}^{-1}(S)$. Since $\pi_{N*}$ is a definable submersion, $S'$ is a definable smooth manifold in $J_{\al D}^k(M, T_N)$. 

Let $f\in \al D^\infty(M, N)$. Consider $f$ as a map from $M$ to $\bb R^n$. Suppose that $g\in \al D^\infty(M, \bb R^n)$ is sufficiently close to $f$ such that $j^kg \pitchfork S'$. We may assume that $g \in \al D^\infty(M, T_N)$. Note that $j^k(\pi_N \circ g) = \pi_{N*} \circ j^k g$. Then by Lemma \ref{lem_composetrans} we have that  $j^k(\pi_N \circ g) \pitchfork S$. Moreover, since the map
$\al D^\infty(M, T_N) \to \al D^\infty(M, N)$ given by $g \mapsto \pi_N \circ g$ is continuous by Lemma \ref{lem_jet_map2}, it follows that $\pi_N\circ g$ is sufficiently close to $f$. Therefore, we can reduce the proof to the case $N= \bb R^n$.

For each $S \in \Sigma$, set
$$S'' := \{(x, y, a_\alpha)_{1\leq |\alpha|\leq k} \in J_{\al D}^k(T_M, \bb R^n): (\pi_M(x), y, a_\alpha)_{1\leq |\alpha|\leq k} \in S\}.$$
Clearly, $S''$ is a definable smooth submanifold of 
$J_{\al D}^k(T_M, \bb R^n)$. 

Let $f\in \al D^\infty(M, \bb R^n)$. Set $\tilde{f} := f \circ \pi_M:  T_M \to \bb R^n$. Suppose $g \in  \al D^\infty(T_M, \bb R^n)$ is sufficiently close to $\tilde{f}$ such that $j^kg\pitchfork S''$. By Lemma \ref{lem_restriction}, $g \circ \pi_M$ is also close to $f$, and again as a consequence of Lemma  \ref{lem_composetrans},  $j^k (g \circ \pi_M) \pitchfork S$.  Thus we can reduce the proof to the case when $M$ is an open set of $\bb R^m$. 

Let $f \in \al D^\infty( M, \bb R^m)$ and $U_f$ be an open neighborhood of $f$ in the $\al D^\infty$-topology. It suffices to show that there is $g \in  U_f$ such that $j^kg \pitchfork \Sigma$. 

Since $U_f$ is open in the $\al D^\infty$-topology, there are $l \in \bb N$ and a definable continuous function $\varepsilon: M \to (0,\infty)$ such that $\al U_\varepsilon^l(f) \subset U_f$.  We can choose $l$ such that $l \geq k$ (since as $l$ increases, $\al U_\varepsilon^l(f)$ gets smaller). We will show that $\al U_\varepsilon^l(f)$ contains an element $g$ such that $j^k g \pitchfork \Sigma$.  We separate the proof into two cases as follows.

\textbf{Case 1:} $M$ is an open subset of $\bb R^m$ and $N = \bb R$. 

Let $R = \#\{\alpha \in \bb N^m: |\alpha|\leq l\}$, $C= R^2 (l!)^2$. 

By Lemma \ref{lem_appro_def}, there is a definable smooth positive function $\varphi: M \to \bb R$ such that 
$$|\partial^\alpha \varphi(x)| < \frac{\varepsilon(x)}{C(1 +\|x\|^l)}, \text{ for all } |\alpha| \leq l.$$

Consider the following family of maps
$$ F(x, t)= f_s(x) := f(x) + \sum_{|\alpha|\leq k} s_\alpha x^\alpha\varphi(x),$$
where $x \in M$ and $s = (s_\alpha)_{|\alpha|\leq k}\in I^R$ with $I = (0,1)$. 
By the same computation as in Theorem 2 of \cite{loi1}, it follows that $f_s \in \al U_\varepsilon^l (f)$ for all $s\in I ^R$. In addition the map 
$$\Phi: M \times I^R \to J^k_{\al D}(M, \bb R), (x, t) \mapsto(x, (\partial^\alpha f_s(x)))_{|\alpha|\leq k}$$ is a submersion.  By Lemma \ref{lem_density_def}, there exists an $s$ such that $j^k f_s \pitchfork \Sigma$. Then, $g = f_s$ has the required property. 

\textbf{Case 2:} $M$ is an open subset of $\bb R^m$ and $N = \bb R^n$.  The result is obtained by applying the same arguments as in the case 1 for the family 
$$F(x, s) := (f_1(x) + \sum_{|\alpha|\leq k} s_{1,\alpha} x^\alpha\varphi(x),\ldots, f_n(x) + \sum_{|\alpha|\leq k} s_{n,\alpha} x^\alpha\varphi(x)),$$
where $s = (s_{i, \alpha})_{1\leq n, |\alpha|\leq k} \in I^{nR}$.

(ii) Consider the map
$$ j^k: \al D^\infty(M, N) \to \al D^\infty(M, J^k_{\al D} (M, N)).$$ Let $\al L:= \{ f\in \al D^\infty(M, J^k_{\al D} (M, N)), f \pitchfork \Sigma\}$. Since $\Sigma$ is an  (a)-regular stratification, $\al L$ is open in $\al D^\infty(M, J^k_{\al D} (M, N)$ (see Lemma \ref{lem_a_open}). By Lemma \ref{lem_jet_map1}, this shows that the map $j^k$ is continuous, therefore, $\al A^k_\Sigma = (j^k)^{-1} (\al L)$ is open.
\end{proof}

\section{The set of non-transverse maps}

Let $M$ and $N $ be definable smooth manifolds. Let $\Sigma$ and $\Sigma'$ be finite collections of $\al C^1$-submanifolds of $N$. We write $\Sigma < \Sigma'$ if elements of $\Sigma$ are also elements of $\Sigma'$. If $\Sigma$ and $\Sigma'$ are stratifications of the same set $V \subset N$ then $\Sigma'$ is said to be a refinement of $\Sigma$ if every stratum of $\Sigma$ is a union of some strata of $\Sigma'$. 

Denote by $\al A_{\Sigma}$ the set of maps transverse to $\Sigma$, i.e.
$\al A_{\Sigma} := \{f \in \al D^{\infty}(M,N) : f \pitchfork \Sigma\}.$ The following observations are clear from the definitions:

(1) If $\Sigma < \Sigma'$ then $\al A_{\Sigma'} \subset \al A_{\Sigma}$.

(2) If $\Sigma'$ is a refinement of $\Sigma$ then $\al A_{\Sigma'} \subset \al A_{\Sigma}$.

We will now prove the main theorem of this section.

\begin{thm}\label{cor_strong_transversality}
 Let $\Sigma$ be a finite collection of definable $\al C^1$-submanifolds of $J_{\al D}^k(M, N)$. For $k\in \bb N$, define 
	$$\al A^k_{ \Sigma}:=\{ f \in \al D^\infty (M, N): j^k f \pitchfork \Sigma\}.$$ Then $\al D^\infty(M, N) \setminus \al A^k_{ \Sigma}$ is nowhere dense in $\al D^\infty(M, N)$ with the $\al D^\infty$-topology.
\end{thm}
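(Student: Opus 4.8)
The plan is to reduce the statement to the two parts of Theorem \ref{thm_transversality} by replacing the arbitrary finite collection $\Sigma$ with a compatible Whitney $(a)$-regular stratification. Recall that a subset of a topological space is nowhere dense exactly when its closure has empty interior, and that any subset of a nowhere dense set is again nowhere dense. Consequently, to prove that $\al D^\infty(M,N)\setminus\al A^k_\Sigma$ is nowhere dense it suffices to exhibit an open dense set contained in $\al A^k_\Sigma$: its complement is then closed with empty interior, hence nowhere dense, and $\al D^\infty(M,N)\setminus\al A^k_\Sigma$ is contained in it. So the whole problem becomes the production of such an open dense subset inside $\al A^k_\Sigma$.

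First I would pass from the collection $\Sigma=\{S_1,\dots,S_r\}$, whose members are merely definable $\al C^1$-submanifolds (possibly overlapping and not forming a stratification), to a well-behaved stratification. Set $V:=\bigcup_{i}\overline{S_i}$, a definable closed subset of $J_{\al D}^k(M,N)$. Invoking the fact recalled in the introduction that any finite family of definable sets admits a compatible Whitney $(a)$-regular definable stratification, I obtain a Whitney $(a)$-regular stratification $\Sigma'$ of $V$, with definable $\al C^1$ strata satisfying the frontier condition, such that each $S_i$ is a union of strata of $\Sigma'$.

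The key step is the inclusion $\al A^k_{\Sigma'}\subset\al A^k_\Sigma$, which is the jet-level analogue of observation (2). To verify it, fix $f$ with $j^kf\pitchfork\Sigma'$ and a point $p\in M$ with $j^kf(p)\in S_i$. Since $S_i$ is a union of strata and $\Sigma'$ is a partition of $V$, the unique stratum $X\in\Sigma'$ containing $j^kf(p)$ satisfies $X\subset S_i$; because $X$ and $S_i$ are $\al C^1$-submanifolds with $X\subset S_i$, we get $T_{j^kf(p)}X\subset T_{j^kf(p)}S_i$, so transversality of $j^kf$ to $X$ already forces transversality to $S_i$ at $p$. As this holds at every such $p$ and for every $i$, we conclude $j^kf\pitchfork\Sigma$, proving the inclusion. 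Now $\Sigma'$ is a Whitney $(a)$-regular stratification of the closed set $V$, so Theorem \ref{thm_transversality}(ii) makes $\al A^k_{\Sigma'}$ open, and Theorem \ref{thm_transversality}(i) makes it dense. Therefore $\al A^k_\Sigma$ contains the open dense set $\al A^k_{\Sigma'}$, so $\mathrm{int}(\al A^k_\Sigma)$ is dense, and $\al D^\infty(M,N)\setminus\al A^k_\Sigma\subset \al D^\infty(M,N)\setminus\al A^k_{\Sigma'}$ is nowhere dense, as desired.

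The only genuinely non-formal ingredient, and the step I expect to demand the most care, is the construction of the compatible Whitney $(a)$-regular stratification $\Sigma'$: one must ensure that the strata are definable and $\al C^1$, that each $S_i$ is genuinely realized as a union of strata even when the $S_i$ overlap, and that $\Sigma'$ satisfies the frontier condition on the closed set $V$ so that the hypotheses of Theorem \ref{thm_transversality}(ii) are met verbatim. All of this is available from the o-minimal stratification theory cited in the introduction, but it is worth stating explicitly which version of the compatible $(a)$-regular stratification theorem is being used rather than leaving it implicit.
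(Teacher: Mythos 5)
Your proof is correct and takes essentially the same route as the paper's: both replace $\Sigma$ by a definable Whitney $(a)$-regular stratification $\Sigma'$ of the closure of the union of the members of $\Sigma$, compatible with those members, deduce the inclusion $\al A^k_{\Sigma'}\subset \al A^k_{\Sigma}$, and conclude via Theorem \ref{thm_transversality} that $\al A^k_{\Sigma'}$ is open and dense. The only cosmetic difference is that the paper splits $\Sigma'$ into the strata lying over $\overline{X}\setminus X$ and a refinement of $\Sigma$ and then cites its observations (1)--(2), whereas you verify the needed tangent-space inclusion $T_{j^kf(p)}X\subset T_{j^kf(p)}S_i$ directly, which amounts to the same thing.
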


\begin{proof} It suffices to show that $\al A^k_\Sigma$ contains a subset which is open and dense in $\al D^\infty(M, N)$ with the  $\al D^\infty$-topology.
	
Let $X$ be the union of all elements of $\Sigma$. We denote by $\overline{X}$ the closure of $X$ in $J_{\al D}^k(M, N)$. Let $\Sigma'$ be a definable $(a)$-regular stratification of $\overline{X}$ of $X$ which is compatible with the family consisting of  $\overline{X}\setminus X$ and all strata of $\Sigma$ (this is possible due to \cite{loi3}, \cite{loi2}  or \cite{ntt}). Write 
	$$ \Sigma' = \{\Sigma'_1, \Sigma'_2\}$$ where 
	$\Sigma'_1$ is the stratification of $\overline{X}\setminus X$ and $ \Sigma'_2$ is a refinement of $\Sigma$. By the observations above, that shows
	$$\al A^k_{\Sigma'} \subset \al A^k_{\Sigma'_2} \subset \al A^k_{\Sigma}.$$
	Since $\Sigma'$ is a Whitney $(a)$-regular stratification of the closed set $\overline{X}$, by Theorem \ref{thm_transversality}, $\al A^k_{\Sigma'}$ is open and dense  in $\al D^\infty(M, N)$ with the $\al D^\infty$-topology. This ends the proof. 
\end{proof}

\begin{cor}\label{cor_non_transverse}
    Let $\Sigma$ be a finite collection of definable $\al C^1$-submanifolds of $N$. Then $\al D^\infty(M, N)\setminus  \al A_\Sigma$ is nowhere dense in $\al D^\infty(M, N)$ with the $\al D^\infty$-topology. 
\end{cor}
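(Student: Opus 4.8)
The plan is to derive this as the special case $k = 0$ of Theorem \ref{cor_strong_transversality}. Recall that the definable $0$-jet space is canonically identified with the product, $J^0_{\al D}(M, N) \equiv M \times N$, and under this identification $j^0 f$ is the graph map $x \mapsto (x, f(x))$. For each $S \in \Sigma$, set $\tilde S := M \times S$; since $M$ is a definable smooth manifold and $S$ is a definable $\al C^1$-submanifold of $N$, the product $\tilde S$ is a definable $\al C^1$-submanifold of $M \times N = J^0_{\al D}(M, N)$. Write $\tilde\Sigma := \{\tilde S : S \in \Sigma\}$, a finite collection of definable $\al C^1$-submanifolds of $J^0_{\al D}(M, N)$.

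The key --- and essentially only --- step is to check that transversality of $f$ to $S$ is equivalent to transversality of the $0$-jet to $\tilde S$, i.e. $f \pitchfork S$ if and only if $j^0 f \pitchfork \tilde S$. First I would fix $x \in M$ with $f(x) \in S$ (otherwise both transversality conditions hold vacuously at $x$). Then, writing $D_x(j^0 f)(v) = (v, D_x f(v))$ and $T_{(x, f(x))} \tilde S = T_x M \times T_{f(x)} S$, a direct linear-algebra computation shows that
$$D_x(j^0 f)(T_x M) + T_{(x, f(x))} \tilde S = T_x M \times T_{f(x)} N$$
holds if and only if $D_x f(T_x M) + T_{f(x)} S = T_{f(x)} N$; the first factor is always filled because the graph map projects isomorphically onto $T_x M$, so the condition reduces exactly to surjectivity in the second factor, which is precisely $f \pitchfork_x S$. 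Hence $\al A_\Sigma = \al A^0_{\tilde\Sigma}$.

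With this identification in hand the conclusion is immediate: applying Theorem \ref{cor_strong_transversality} to the collection $\tilde\Sigma$ of definable $\al C^1$-submanifolds of $J^0_{\al D}(M, N)$ with $k = 0$ shows that $\al D^\infty(M, N) \setminus \al A^0_{\tilde\Sigma}$ is nowhere dense in the $\al D^\infty$-topology, and since $\al A_\Sigma = \al A^0_{\tilde\Sigma}$ this is exactly the claim. I do not anticipate a genuine obstacle here, as the whole content is the transversality equivalence of the previous paragraph, which is routine once the graph interpretation of $j^0 f$ is made explicit; the only points to keep track of are that $\tilde S$ inherits definability and $\al C^1$-regularity from $S$ (so that Theorem \ref{cor_strong_transversality} applies verbatim) and that no $(a)$-regularity need be assumed on $\Sigma$, since the refinement step producing a Whitney $(a)$-regular stratification is carried out inside the proof of that theorem.
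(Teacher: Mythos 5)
Your proposal is correct and is essentially the paper's own argument: both pull $\Sigma$ back to $J^0_{\al D}(M,N) = M \times N$ (your $\tilde S = M \times S$ is exactly the paper's $\pi^{-1}(S)$) and then apply Theorem \ref{cor_strong_transversality} with $k=0$. The only cosmetic difference is that you verify the full equivalence $f \pitchfork S \Leftrightarrow j^0f \pitchfork \tilde S$ by a direct linear-algebra computation, while the paper needs and cites only the one implication $j^0f \pitchfork \pi^{-1}(\Sigma) \Rightarrow f \pitchfork \Sigma$ via Lemma \ref{lem_composetrans}, since the inclusion $\al A^0_{\tilde\Sigma} \subset \al A_\Sigma$ already suffices for nowhere density of the complement.
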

\begin{proof} Note that $J^0_{\al D}(M, N) = M \times N$ and $j^0 f(x) = (x, f(x))$. Since the projection $\pi: M \times N \to N$ $(x, y) \mapsto y$ is a submersion, $\Sigma':= \pi^{-1}(\Sigma)$ is a collection submanifolds of $M \times N$.  By Lemma \ref{lem_composetrans}, if $f \in \al D^\infty (M, N)$ such that $j^0 f \pitchfork \Sigma'$ then $ f\pitchfork \Sigma$. This implies that the set 
	$\al A^0_{\Sigma'} := \{f \in \al D^\infty(M, N): j^0f\pitchfork \pi^{-1}(\Sigma)\}$ is contained in $\al A_\Sigma$. By Theorem \ref{cor_strong_transversality}, the complement of $\al A^0_{\Sigma'}$ in $\al D^\infty(M, N)$ is nowhere dense, then so is the complement of $\al A_\Sigma$. 
\end{proof}

\section{Openness implies Whitney (a)-regularity}

Let $\Sigma$ be a definable stratification of a definable set $V\subset \bb R^n$. Suppose that $p \in V$ lies in a stratum $S$ of $\Sigma$. Consider a linear subspace $H$ of $T_p\bb R^n \equiv \bb R^n$ such that $H + T_p S= \bb R^n$. Let $M$ be a definable smooth manifold such that $\dim M \geq \dim H$. We say that a map $f : M \to \bb R^n$ \emph{has a given derivative} at a point $x \in M$ if:

1. $f(x) = p$,

2. $D_xf(T_xM) = H$.

Set $\fs D_H := \{f \in \al D^{\infty}(M,\bb R^n) :\exists\, x \in M,  f(x) = p, D_xf(T_xM) = H\}.$ We will show that there exists a map $f \in \fs D_H$ transverse to $\Sigma$.

Notice that the transversality theorem guarantees the existence of maps transverse to any stratification but not of transverse maps with a given derivative. For example, if the dimension of $M$ is less than the codimension of a submanifold $S$ of $\bb R^n$, then the only maps transverse to $S$ are those that do not intersect $S$.

\begin{lem}\label{givendf} There exists an $f \in \fs D_H$ such that $f \pitchfork \Sigma$.
\end{lem}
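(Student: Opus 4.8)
The plan is to produce $f$ explicitly as a generic member of a perturbation of a simple ``base'' map, where the perturbation is engineered to vanish to second order at a single chosen point so that the prescribed value and derivative there are preserved. First I would fix an arbitrary point $x_0 \in M$ and construct a definable smooth $f_0 : M \to \bb R^n$ with $f_0(x_0) = p$ and $D_{x_0}f_0(T_{x_0}M) = H$. Since $\dim M \geq \dim H$, in a definable smooth chart $\phi$ centered at $x_0$ one may choose a linear surjection $L_0$ onto $H$ and set $f_0 = p + \beta\,(L_0 \circ \phi)$, where $\beta$ is a definable smooth bump function equal to $1$ near $x_0$ (such bump functions exist in the structure we work with). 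Then $f_0 \in \fs D_H$, but it need not be transverse to $\Sigma$.

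Next I would introduce the family $F(x,s) = f_0(x) + \theta(x)\sum_{j=1}^n s_j e_j$, with parameter $s \in \bb R^n$, where $e_1,\ldots,e_n$ is the standard basis of $\bb R^n$ and $\theta : M \to [0,\infty)$ is a definable smooth function vanishing exactly at $x_0$, to second order there, and positive on $M \setminus \{x_0\}$ (for instance $\|\phi\|^2$ near $x_0$, patched to a positive constant outside the chart). Because $\theta(x_0)=0$ and $D_{x_0}\theta = 0$, each $f_s := F(\cdot,s)$ satisfies $f_s(x_0)=p$ and $D_{x_0}f_s = D_{x_0}f_0$, so $f_s \in \fs D_H$ for every $s$. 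On the other hand, for $x \neq x_0$ the partial derivatives $\partial_{s_j}F = \theta(x)\,e_j$ span $\bb R^n$ since $\theta(x) > 0$, so the map $\Phi : (M \setminus \{x_0\}) \times \bb R^n \to \bb R^n$, $(x,s)\mapsto F(x,s)$, is a definable smooth submersion.

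I would then apply Lemma \ref{lem_density_def} on the open definable manifold $M \setminus \{x_0\}$, with parameter space $\bb R^n$ and target $\bb R^n$, to obtain a parameter $s$ for which $f_s|_{M \setminus \{x_0\}} \pitchfork \Sigma$. For such an $s$ the map $f_s$ is automatically transverse to $\Sigma$ at $x_0$ as well: indeed $f_s(x_0)=p$ lies only in the stratum $S$, transversality to every other stratum holds vacuously at $x_0$ because $p$ is not in it, and transversality to $S$ at $x_0$ is precisely the hypothesis $D_{x_0}f_s(T_{x_0}M) + T_pS = H + T_pS = \bb R^n$. Hence $f_s \pitchfork \Sigma$ on all of $M$ and $f_s \in \fs D_H$, which is the desired map.

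The main obstacle is the single point $x_0$. Since $\Sigma$ is \emph{not} assumed to be $(a)$-regular, transversality of a map at $x_0$ does not propagate to a neighborhood of $x_0$, so one cannot simply arrange $f_0$ to be transverse near $x_0$ and then perturb only away from $x_0$. The device that resolves this, and that replaces Trotman's use of the Baire property, is to run the parametrized transversality theorem on the punctured manifold $M \setminus \{x_0\}$ using a perturbation that is nondegenerate off $x_0$ yet flat enough at $x_0$ to preserve the prescribed $1$-jet, and then to dispose of the lone remaining point $x_0$ by the explicit value-and-derivative computation above.
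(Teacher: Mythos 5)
Your proof is correct, and it takes a genuinely more direct route than the paper's. Both arguments rest on the same key device that replaces Trotman's Baire-category argument: a perturbation whose coefficient vanishes to second order at the marked point (your $\theta$, the paper's factor $\|x\|^2$ in $\Psi(x,s)=(x,s_1\|x\|^2,\ldots,s_{n-k}\|x\|^2)$), so that the $1$-jet there is frozen; then the parametrized transversality lemma (Lemma \ref{lem_density_def}) is applied on the punctured domain, and transversality at the marked point itself is recovered from the hypothesis $H+T_pS=\bb R^n$ together with the fact that strata are pairwise disjoint, so $p$ lies in no stratum other than $S$. The difference is where the trick is run. The paper runs it only on $\bb R^k$ with $k=\dim H$, producing $\psi_s:\bb R^k\to\bb R^n$ with $\psi_s\pitchfork\Sigma$ and $D_0\psi_s(\bb R^k)=H$, and then manufactures $f=\psi_s\circ h$, where $h:M\to\bb R^k$ must simultaneously be a submersion at a point, send that point to $0$, and be transverse to $\psi_s^{-1}(\Sigma)$; obtaining $h$ needs two further inputs (openness of the submersion-at-a-point condition and density of transverse maps, i.e.\ Corollary \ref{cor_non_transverse}) plus Lemma \ref{lem_composetrans} at the end. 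You instead run the family directly on $M$, $F(x,s)=f_0(x)+\theta(x)s$, which is a submersion on $(M\setminus\{x_0\})\times\bb R^n$ precisely because $\theta>0$ off $x_0$, so a single application of Lemma \ref{lem_density_def} finishes; your only other ingredients are definable smooth bump functions and charts, which are available in the exponential, smooth-cell-decomposition setting the paper works in (the paper uses the same tools to extend $h'$ to $M$). Your route is shorter and self-contained: it dispenses with the density input and with Lemma \ref{lem_composetrans}, and it also sidesteps a delicate point in the paper's version, namely that a map $h\in\fs S\cap\fs T$ produced by openness-plus-density is not literally guaranteed to satisfy $h(x)=0$ at the prescribed point (one must relocate the base point using submersivity of $h$ near $x$). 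What the paper's factorization through $\bb R^k$ buys is that it stays entirely within the genericity machinery already established; the extra structure it produces (an $f$ factoring through a $k$-dimensional target) is not needed for the statement. Your handling of the lone point $x_0$ is complete as written: $\theta\geq 0$ with $\theta(x_0)=0$ forces $D_{x_0}\theta=0$ in any case, and transversality at $x_0$ to strata not containing $p$ is vacuous by the pointwise definition of transversality.
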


\begin{proof} Without loss of generality we assume that $H= \bb R^k \times \{0\} \subset \bb R^n$ and $p = 0$. First we will show that there exists a definable smooth map $g : \bb R^k \to \bb R^n$ such that $g(0) = 0$, $D_0g(\bb R^k) = H$ and $g \pitchfork \Sigma$.  
	
Consider the map $\Psi : \bb R^{k}\times\bb R^{n-k} \to \bb R^n$ given by
$$\Psi(x,s)= \psi_s(x) := (x_1,\ldots,x_k,s_1\|x\|^2, \ldots, s_{n-k}\|x\|^2)$$
where $x = (x_1, \ldots, x_k)$, $s = (s_1, \ldots, s_{n-k})$ and $\|x\|^2 = x_1^2+\cdots+x_k^2$. The Jacobian of this map looks like
\begin{align*}
\left [
\begin{array}{c | c}
I_k & 0\\
\hline
\begin{array}{ccc}
. & . & .\\
. & . & .\\
. & . & .\\
\end{array}
& 
\begin{array}{ccc}
{\scriptstyle \|x\|^2} &  \ldots & 0 \\
\vdots & \ddots & \vdots \\
0 & \ldots & {\scriptstyle \|x\|^2}\\
\end{array}
\end{array}\right ]_.
\end{align*}
It is then clear that $\Psi$ is a submersion on $\{\bb R^k \setminus \{0\}\} \times \bb R^{n-k}$. Put $\Psi' := \Psi|_{\{\bb R^k \setminus \{0\}\} \times \bb R^{n-k}}$. By Lemma \ref{lem_density_def},
 $$\al S := \{s \in \bb R^{n-k} : \psi'_s := \Psi'(x,s) \pitchfork \Sigma\}$$
is dense in $\bb R^{n-k}$. We claim that $\forall s \in \al S$, $\psi_s \pitchfork \Sigma$. Indeed, at $x = 0$ we have $D_0\psi_s (\bb R^k) = H$ which is transverse to $\Sigma$ at $\psi_s(0)=0$, so $\psi_s \pitchfork_0 \Sigma$. For $x \neq 0$, we have $\psi_s = \psi_s'$ which is transverse to $\Sigma$. This proves the claim. 

Next, we fix an $s \in \al S$ and show that there exists a definable smooth map $h : M \to \bb R^k$ such that $h(x) = 0$ for some $x \in M$ which is a submersion at $x$ and $h \pitchfork \psi_s^{-1}(\Sigma)$. Take a point $x \in M$ and a definable smooth coordinate chart $\xi : V \to \bb R^m$ around $x$ such that $\xi(x) = 0$. Now, define $L : \bb R^m \to \bb R^k$ by 
$$L(x_1,\ldots,x_m) = (x_1,\ldots,x_k).$$ 
Since $m \geq k$, $L$ is a linear submersion at $0 \in \bb R^m$. Put $h' := L\circ \xi : V \to \bb R^k$. Since $\xi^{-1}$ is a diffeomorphism, $h'$ is a submersion at $x$. We can then smoothly extend $h'$ to $M$ by means of a smooth definable bump function. Let us call this extension $\tilde{h}$ and notice that it is a submersion at $x$.

Consider the set $\fs S$ of definable smooth maps between $M$ and $\bb R^k$ that are submersion at $x \in M$. Obviously, $\fs S$ is non-empty since $\tilde{h}\in \fs S$. Moreover, it is easy to see that $\fs S$ is an open set in $\al D^{\infty}(M,\bb R^n)$ with the $\al D^\infty$-topology. Since the set of smooth definabe  maps transverse to $\psi_s^{-1}(\Sigma)$, say $\fs T$, is dense in $\al D^{\infty}(M,\bb R^n)$ with the $\al D^\infty$-topology (by Corollary \ref{cor_non_transverse}) the intersection of $\fs S$ with $\fs T$ is non-empty. Take $h \in \fs S \cap \fs T$. Then $h$ is a smooth definable map  transverse to $\psi_s^{-1}(\Sigma)$ which is also a submersion at $x \in M$.

Now, put $f := \psi_s \circ h : M \to \bb R^n$. We claim that $f$ is the required map, i.e. $D_xf(T_xM) = H$ and $f$ is transverse to $\Sigma$. Just following the construction of $f$ it is easy to see that $D_xf(T_xM) = H$. We need to verify that $f$ is transverse to $\Sigma$. This fact immediately follows from Lemma \ref{lem_composetrans} and the proof is complete.
\end{proof}

We can now prove a definable version of the main result of Trotman \cite{trotman1} (see also \cite{trotman2}). The result is stated as follows. 

\begin{thm}\label{thm_openness_a_regularity} Let $N$ be a definable smooth manifold. Let $\Sigma$ be a definable stratification of some definable closed subset of $N$. Then the following statements are equivalent:
	
	(1) $\Sigma$ is Whitney $(a)$-regular.
	
	(2) For any definable smooth manifold $M$, the set $\{f \in \al D^{\infty}(M,N) : f \pitchfork \Sigma\}$ is open in $\al D^{\infty}(M,N)$ with the $\al D^\infty$-topology.
	
	(3) For any definable smooth manifold $M$ of dimension greater than or equal to the codimension (in $N$) of the smallest stratum in $\Sigma$, the set 
	$$\{f \in \al D^\infty(M, N): f \pitchfork \Sigma\}$$ is open in $\al D^\infty(M, N)$ with the $\al D^\infty$-topology.
\end{thm}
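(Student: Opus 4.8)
The plan is to prove the cycle of implications $(1) \Rightarrow (2) \Rightarrow (3) \Rightarrow (1)$. The implication $(1) \Rightarrow (2)$ is precisely Lemma \ref{lem_a_open}, and $(2) \Rightarrow (3)$ is immediate, since $(3)$ is simply the assertion of $(2)$ restricted to those $M$ whose dimension is at least the codimension of the smallest stratum. Thus the whole content lies in $(3) \Rightarrow (1)$, which I would prove by contraposition: assuming $\Sigma$ is \emph{not} Whitney $(a)$-regular, I will exhibit an admissible $M$ and a map $f \in \al A_\Sigma$ that is a limit, in the $\al D^\infty$-topology, of maps $f_n \notin \al A_\Sigma$, so that $\al A_\Sigma$ fails to be open.

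For the construction, since $(a)$-regularity is a $\al C^1$-diffeomorphism invariant and everything below will be localized near a single point, I may pass to a chart and assume $N$ is open in $\bb R^n$ with $T_yN = \bb R^n$. Failure of $(a)$-regularity gives a pair of adjacent strata $(X,Y)$, a point $y \in Y \cap \overline X$, and a sequence $x_n \to y$ in $X$ with $T_{x_n}X \to \tau \in \bb G_n^{\dim X}$ but $T_yY \not\subset \tau$. Picking $v \in T_yY \setminus \tau$ and using $\dim \tau < n$, I choose a hyperplane $P \supset \tau$ with $v \notin P$, so $T_yY \not\subset P$ and hence $P + T_yY = \bb R^n$; I then select $H \subset P$ with $H + T_yY = \bb R^n$ and $\dim H = \operatorname{codim} Y$, which is possible because $P$ surjects onto $\bb R^n/T_yY$. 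Taking $M = \bb R^{c}$, where $c$ is the codimension of the smallest stratum, gives $\dim M = c \geq \operatorname{codim} Y = \dim H$, so $M$ is admissible in $(3)$ and meets the hypothesis of Lemma \ref{givendf}. That lemma (applied in the chart) yields $f \in \fs D_H$ with $f \pitchfork \Sigma$, say $f(x_0) = y$ and $D_{x_0}f(T_{x_0}M) = H$.

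Now the perturbation. Since $T_{x_n}X \to \tau \subset P$, I fix a covector $\eta$ with $\ker \eta = P$ and perturb it to covectors $\eta_n \to \eta$ with $\eta_n|_{T_{x_n}X} = 0$; then $P_n := \ker \eta_n$ satisfy $T_{x_n}X \subset P_n$ and $P_n \to P$. Inside $P_n$ I choose $H_n \to H$ with $\dim H_n = \dim H$, so $H_n + T_{x_n}X \subset P_n \neq \bb R^n$. I realize $f_n$ by a \emph{localized} modification: in a definable chart $\xi$ about $x_0$ with $\xi(x_0)=0$ and a definable smooth bump function $\beta \equiv 1$ near $0$ of compact support, I keep $f_n = f$ off the support of $\beta\circ\xi$ and add to $f$, on it, the correction $\beta(\xi(\cdot))\,[(x_n - y) + (L_n - L)\,\xi(\cdot)]$, where $L = D_0(f\circ\xi^{-1})$ and $L_n \to L$ has image $H_n$. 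Each $f_n$ is definable and smooth, maps into $N$ for large $n$, and satisfies $f_n(x_0) = x_n \in X$ with $D_{x_0}f_n(T_{x_0}M) = H_n$; hence $D_{x_0}f_n(T_{x_0}M) + T_{x_n}X \subset P_n \neq \bb R^n$, so $f_n \not\pitchfork X$ and $f_n \notin \al A_\Sigma$. Because the correction is supported in a fixed compact set and its $\al C^k$-norms tend to $0$ (as $x_n - y \to 0$ and $L_n - L \to 0$), we get $f_n \in \al U^k_\varepsilon(f)$ for every $k$ and every continuous definable $\varepsilon$ once $n$ is large, i.e. $f_n \to f$ in the $\al D^\infty$-topology. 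This contradicts $(3)$ and closes the cycle.

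The main obstacle is $(3) \Rightarrow (1)$, and within it two points need care. The first is the linear-algebra step that turns the failure $T_yY \not\subset \tau$ into a hyperplane $P$ at once containing $\tau$ and transverse to $T_yY$, together with the continuous choices $P_n \to P$ and $H_n \to H$ keeping $H_n + T_{x_n}X$ inside a hyperplane for \emph{every} $n$ rather than only in the limit. The second, genuinely delicate, point is that $f_n \to f$ must hold in the $\al D^\infty$-topology, which — unlike the Whitney topology — is sensitive to behaviour at infinity; this is exactly why the perturbation must be cut off by a definable bump function (available in our exponential structure), so that $f_n$ and $f$ differ only on a fixed compact set. It is here that Lemma \ref{givendf}, supplying a transverse map with prescribed derivative without any Baire-category argument, replaces the two non-trivial ingredients used by Trotman.
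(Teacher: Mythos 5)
Your proposal is correct and takes essentially the same route as the paper's proof: $(1)\Rightarrow(2)$ via Lemma \ref{lem_a_open}, $(2)\Rightarrow(3)$ trivially, and $(3)\Rightarrow(1)$ by contraposition --- passing to a chart, extracting $H$ with $H+T_yY=\bb R^n$ while $H+\tau$ lies in a hyperplane, invoking Lemma \ref{givendf} for a transverse map with prescribed derivative, and destroying transversality by bump-localized perturbations $f_n\to f$ in the $\al D^\infty$-topology with $f_n(x_0)=x_n$ and $D_{x_0}f_n(T_{x_0}M)=H_n$, $H_n+T_{x_n}X\neq\bb R^n$. Your covector construction of the hyperplanes $P_n\supset T_{x_n}X$ and the explicit cut-off correction $\beta(\xi(\cdot))\,[(x_n-y)+(L_n-L)\xi(\cdot)]$ are merely concrete implementations of the paper's converging bases $\{v^i_j\}$ and of its perturbation $g_i=\lambda(A_i\circ f+x_i)+(1-\lambda)f$, so the two arguments coincide in substance.
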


\begin{proof} 
	That (1) implies (2) follows from \ref{lem_a_open}. That (2) implies (3) is trivial. 
	
	We show $(3)$ implies $(1)$. Suppose $\Sigma$ is not Whitney $(a)$-regular. Then there exist a pair of adjacent strata $(X, Y)$ of $\Sigma$ and a point $y\in Y$ such that $X$ is not $(a)$-regular over $Y$ at $y$. Let $(U, \phi)$ be a  definable smooth chart of $N$ around the point $y$ such that $\phi(U) = \bb R^n$ and $\phi(y) = 0$ where $n = \dim N$. Set $\Sigma' := \phi (\Sigma' \cap U)$. Since diffeomorphisms preserve\footnote{In fact, $(a)$-regularity is a $C^1$-invariant; see Proposition 2.1.2 in the Ph.D. thesis \cite{Nhan2} of the first author.} Whitney $(a)$-regularity, $\Sigma'$ is not $(a)$-regular. More precisely, if $X':= \phi (X \cap U)$ and $Y' := \phi(Y \cap U)$, then $X'$ is not $(a)$-regular over $Y'$ at $0$. This means that there is a sequence $\{x_i\}$ in $X'$ tending to $0$ such that $T_{x_i} X'$ tends to $\tau \in \bb G_n^{l}$ ($l =\dim X'$) and $T_0 Y' \not\subset \tau$.
	
Let $r = \dim Y'$. By the classical arguments as in \cite{trotman1}, there is a linear subspace $H$ of $\bb R^n$ with $\dim H = n -r$ such that 
	$H + T_0 Y' = \bb R^n$ and $H +\tau \neq \bb R^n$. Suppose that $\dim (H +\tau) = b$ and $\dim (H \cap \tau) = c$. Note that $c \leq n-r\leq b < n$.  Let $\{v_1, \ldots,v_n\}$ be basis of $\bb R^n$ such that $H$ is spanned by $\{v_1, \ldots, v_{n-r}\}$ and $\tau$ is spanned by $\{v_{n-r-c}, \ldots, v_{b}\}$.
	
	Now, at each point $x_i \in X'$, choose $\{v^i_1,  \ldots v^i_n\}$ to be a basis of $T_{x_i} \bb R^n$ such that $T_{x_i}X'$ spanned by $\{v^i_{n-r-c}, \ldots, v^i_{b}\}$ and $\lim_{i\to \infty} v^i_j= v_j$ for $j = 1, \ldots, n$. Denote by $H_i$ the linear subspace spanned by $\{v^i_1, \ldots, v^i_{n-r}\}$. We have 
	\begin{equation*}\label{eq_linear_spaces}
	\lim_{i\to \infty}H_i = H \hspace{1cm} \text{and}\hspace{1cm} H_i + T_{x_i}X' \neq T_{x_i} \bb R^n =\bb R^n. \tag{$\dagger$}
	\end{equation*}
	
	Next, for each $i$ we denote by $A_i: \bb R^n \to \bb R^n$  the linear isomorphism such that $A_i (v_j) = v_j^i, \forall j = 1\ldots, n$. Note that $A_i$ is uniquely determined.  Since $\lim_{i \to \infty} v_j^i = v_j,  \forall j = 1\ldots, n$, $\lim_{i \to \infty} A_i = {Id}_n$. 
	
Let $M$ be a definable smooth manifold as in the hypothesis. Note that $\dim M \geq \dim H$  since $\dim M \geq n- \min_{S\in \Sigma}\{\dim S\}$. By Lemma \ref{givendf}, there exists a definable smooth map $f: M \to \bb R^n$ such that $f(x) =0$ (for some $x\in M$), $D_x f (T_xM) = H$ and $f\pitchfork \Sigma'$.

	Let $f_i :=  A_i\circ f + x_i$.  We have $f_i(x) = x_i$, $D_x f_i (T_x M) = H_i$ but $f_i$ does not converge to $f$ in the $\al D^\infty$-topology since it is not `close' to $f$ at infinity.  Now let $\lambda: M \to [0, 1]$ be a definable smooth function whose value equals $1$ in a small definable neighborhood of $x$ and equals $0$ outside a bigger definable compact neighborhood of $x$ ($\lambda$ can be constructed by using exponential bump functions). Define $g_i := \lambda f_i + (1-\lambda) f$. It is easy to check that 
	$g_i(x) = x_i$, $D_x g_i (T_x M) = H_i$
	and  $\{g_i\}$ converges to $f$ in $\al D^\infty$-topology. It follows from (\ref{eq_linear_spaces}) that  $f_i$ is not transverse to $\Sigma'$ at $x$ while $f$ is transverse to $\Sigma$ at every point on $M$. Set $h:= \phi^{-1}\circ f$ and $h_i =\phi^{-1} \circ g_i$. By Lemma \ref{lem_composetrans}, $h_i \not \pitchfork \Sigma$ and $ h\pitchfork \Sigma$. On the other hand, since $\phi$ is a definable smooth diffeomorphism and $\{f_i\}$ tends to $f$, $\{h_i\}$ tends to $h$ (in the $\al D^\infty$-topology). This shows that the set 
	$$\{f \in \al D^\infty(M, N): f \pitchfork \Sigma\}$$ 
	is not open, a contradiction. 
\end{proof}

\begin{rem} 
(i) The results of Theorem \ref{cor_strong_transversality}  and Theorem \ref{thm_openness_a_regularity} hold for the $\al D^k$-topology ($0<k < \infty$) without any restriction on the o-minimal structure.

(ii) The method used to prove the above theorem works also in the smooth case. Thus we  get a new more constructive proof of the theorem of Trotman (see \cite{trotman2}, \cite{trotman1}).
\end{rem}

\end{document}